\theoremstyle{definition}
\newtheorem*{thm*}{Theorem}
\newtheorem{prop}{Proposition}[section]
\newtheorem{lemma}[prop]{Lemma}
\newtheorem{thm}[prop]{Theorem}
\newtheorem{definition}[prop]{Definition}
\newtheorem{corollary}[prop]{Corollary}
\newtheoremstyle{pourlesremarques}{\topsep}{\topsep}{\normalfont}{}{\bfseries}{.}{ }{}
\theoremstyle{pourlesremarques}
\newtheorem{rem}[prop]{Remark}
\newtheorem*{rem*}{Remark}
\newtheoremstyle{pourlesexemples}{\topsep}{\topsep}{\normalfont}{}{\bfseries}{.}{ }{}
\theoremstyle{pourlesexemples}
\def\presuper#1#2%
\newcommand{\Wh}{\mathcal{W}}
\newcommand{\F}{\mathfrak{\mathcal{F}}}
\newcommand{\Ind}{\operatorname{Ind}}
\newcommand{\ind}{\operatorname{ind}}
\renewcommand{\i}{\iota}
\newcommand{\Hom}{\operatorname{Hom}}
\renewcommand{\subset}{\subseteq}
\newcommand{\G}{\mathcal{G}}
\newcommand{\vol}{\operatorname{vol}}
\newcommand{\GL}{\operatorname{GL}}
\newcommand{\Ker}{\mathrm{Ker}}
\renewcommand{\k}{\kappa}
\renewcommand{\d}{\delta}
\newcommand{\R}{\mathbb{R}}
\newcommand{\C}{\mathbb{C}}
\newcommand{\Q}{\mathbb{Q}}
\newcommand{\N}{\mathbb{N}}
\newcommand{\M}{\mathcal{M}}
\renewcommand{\S}{\mathcal{C}_c^\infty}
\newcommand{\e}{\epsilon}
\newcommand{\1}{\mathbf{1}}
\def\GL{\operatorname{GL}}
\def\Id{\operatorname{Id}}
\def\\Hom{\operatorname{\Hom}}
\def\dim{\operatorname{dim}}
\def\Ql{\overline{\mathbb{Q}_{\ell}}}
\def\Zl{\overline{\mathbb{Z}_{\ell}}}
\def\leq{\leqslant}
\def\L{\mathcal{L}}
\def\r{\mathfrak{r}}
\def\OI{\Omega_{\mathrm{I}}}
\def\E{\mathcal{E}}
\newcommand{\D}{\Delta}
\def\presuper#1#2%
\DeclareRobustCommand{\rvdots}{%
  \vbox{
    \baselineskip4\p@\lineskiplimit\z@
    \kern-\p@
    \hbox{.}\hbox{.}\hbox{.}  \hbox{.}
  }}
\definecolor{dviolet}{RGB}{100,0,100}
\definecolor{blue}{RGB}{0,0,100}
\title{Generalized Whittaker functions and Jacquet modules}
\author{N. Matringe\footnote{Nadir Matringe, Universit\'e de Poitiers, Laboratoire de Math\'ematiques et Applications,
T\'el\'eport 2 - BP 30179, Boulevard Marie et Pierre Curie, 86962, Futuroscope Chasseneuil Cedex. France. \newline Email:~Nadir.Matringe@math.univ-poitiers.fr}}
\date{\today}
\begin{document}
\maketitle

\begin{abstract}
Let $F$ be a non Archimedean local field and $G$ be (the $F$-points of) a connected reductive group defined over $F$. Fix $U_0$ to be the unipotent radical of a minimal parabolic subgroup $P_0$ of $G$, and $\psi:U_0\rightarrow \C^\times$ be a non-degenerate character of $U_0$. Let $P=MU\supseteq P_0$ be a standard parabolic subgroup of $G$ so that the restriction $\psi_M$  of $\psi$ to $M\cap U_0$ is non-degenerate. We denote by $\Wh(G,\psi)$ the space of smooth $\psi$-Whittaker functions on $G$ 
and by $\Wh_c(G,\psi)$ its $G$-stable subspace consisting of functions with compact support modulo $U_0$. In this situation Bushnell and Henniart identified $\Wh_c(M,\psi_M^{-1})$ to the the Jacquet module of $\Wh_c(G,\psi^{-1})$ with respect to $P^-$ (\cite{BHWh}). On the other hand Delorme defined a constant term map from 
$\Wh(G,\psi)$ to $\Wh(M,\psi_M)$ which descends to the Jacquet module of $\Wh(G,\psi)$ with respect to $P$ (\cite{DelormeCT}). We show (as we surprisingly could not find a proof of this statement in the literature) that the descent of Delorme's constant term map is the dual map of the isomorphism of Bushnell and Henniart, in particular the constant term map is surjective. We also show that the constant term map coincides on admissible submodules of $\Wh(G,\psi)$ with the inflation of the "germ map" defined by Lapid and Mao (\cite{LM}) following earlier works of Casselman and Shalika (\cite{CS}). From these results we derive a simple proof of a slight generalization of a theorem of Delorme (\cite{DelormePWi}) and Sakellaridis--Venkatesh (\cite{SV} for quasi-split $G$) on irreducible discrete series with a generalized Whittaker model to the setting of admissible representations with a central character under the split component of $G$, and similar statements in the cuspidal case (also generalizing a result of Delorme) and in the tempered case. We also show that the germ map of Lapid and Mao is injective, answering one of their questions. Finally using a result of Vignéras from \cite{V2} and recent results from \cite{DHKM22}, we show in the context $\ell$-adic representations that the asymptotic expansion of Lapid and Mao can be chosen to be integral for functions in integral $G$-submodules of $\Wh(\pi,\psi)$ of finite length. 
\end{abstract}

\section{Introduction}

Whittaker functions and their various generalizations play an important role in the representation theory of $p$-adic and real groups, in particular due to the fact that they appear naturally as a local analogue of Fourier coefficients of automorphic forms.
In this paper $F$ is a non Archimedean local field and $G$ be (the $F$-points of) a connected reductive group defined over $F$. We fix $U_0$ the unipotent radical of a minimal parabolic subgroup $P_0$ of $G$, and $\psi:U_0\rightarrow \C^\times$ be a non-degenerate character of $U_0$. Let $P=MU\supseteq P_0$ be a standard parabolic subgroup of $G$ so that the restriction $\psi_M$  of $\psi$ to $M\cap U_0$ is non-degenerate. We denote by $\Wh(G,\psi)$ the space of smooth $\psi$-Whittaker functions on $G$ 
and by $\Wh_c(G,\psi)$ its $G$-stable subspace consisting of functions with compact support modulo $U_0$. In this situation Bushnell and Henniart identified $\Wh_c(M,\psi_M^{-1})$ to the the Jacquet module of $\Wh_c(G,\psi^{-1})$ with respect to $P^-$ the parabolic subgroup of $G$ such that $P\cap P^-=M$ (\cite[2.2 Theorem]{BHWh}, see Section \ref{section BH iso}). On the other hand Delorme defined a constant term map from 
$\Wh(G,\psi)$ to $\Wh(M,\psi_M)$ which descends to the Jacquet module of $\Wh(G,\psi)$ with respect to $P$ (\cite[Definition 3.12]{DelormeCT}, see Section \ref{section constant term map}). Both maps have simple characterizations as shown by the authors of \cite{BHWh} and \cite{DelormeCT} repsectively, and this allows us to compare them without too much effort. In fact we prove in Theorem \ref{theorem BH=D} (as we could not find a proof of this statement in the literature, though it might be known to experts) that the descent of Delorme's constant term map is the dual map of the isomorphism of Bushnell and Henniart, in particular the constant term map is surjective and its descent identifies the Jacquet module of $\Wh(G,\psi)$ with respect to $P$ to $\Wh(M,\psi_M)$. We also show in Theorem \ref{theorem Xi vs D} that the constant term map coincides on admissible submodules of $\Wh(G,\psi)$ with the inflation of the "germ map" defined by Lapid and Mao in \cite{LM} following earlier works of Casselman and Shalika from \cite{CS}. From these results we derive in Theorem \ref{theorem quick DSV} a very quick proof of a generalization of a theorem of Delorme (\cite[Théorème 9]{DelormePWi}) and Sakellaridis--Venkatesh (\cite[Corollary 6.3.5]{SV} where $G$ is assumed to be quasi-split) on irreducible discrete series with a generalized Whittaker model to the setting of admissible representations with a central character under the split component of $G$. We get similar statements in the cuspidal and tempered case, generalizing \cite[Théorème 10]{DelormePWi} in the cuspidal case. Theorem \ref{theorem Xi vs D} also shows that the germ map of Lapid and Mao is injective, answering \cite[Question 3.2]{LM}. After restating the asymptotic expansion of Lapid and Mao for functions in admissible submodules of $\Wh(\pi,\psi)$ (Theorem \ref{theorem asymptotic expansion}), we finally show in Theorem \ref{theorem integral asymptotic expansion} in the context $\ell$-adic representations, that the asymptotic expansion in question can be chosen to be integral for integral $G$-submodules of $\Wh(\pi,\psi)$ of finite length.  This last result makes crucial use of results of Vignéras from \cite{V2}  and of the very recent work \cite{DHKM22} (see Section \ref{section V}).

\textbf{Acknowledgement}. We are very grateful to R. Kurinczuk for pointing out the role of the Bushnell--Henniart isomorphism to us in the context of this paper. We thank E. Lapid for useful correspondence. We also thank the referees 
for their very useful comments, allowing in particular to correct some mistakes and clarify some arguments.

\section{Notations}

\subsection{Reductive groups}

Here $F$ is a non Archimedean local field with residual characteristic $p$ and normalized absolute value $|\ |$. We denote by 
$G$ the $F$-points of a connected reductive group defined over $F$. We fix a minimal parabolic subgroup $P_0$ of $G$, denote by 
$U_0$ its unipotent radical, and fix a Levi subgroup $M_0$ of $P_0$. We denote by $A_0$ 
the $F$-split component of (the center of) $M_0$, the group $M_0/A_0$ is compact. The letter $P$ will always denote a 
standard parabolic subgroup of $G$ (i.e. $P_0\subset P$), we denote by $M$ its standard Levi subgroup (i.e. $M$ is the Levi component of $P$ containing $A_0$) and by $U$ its unipotent radical. We will denote by $A_M$ the split component of 
$M$, and by $A_M^1$ the maximal compact (open) subgroup of $A_M$. We denote by $P^-$ the parabolic subgroup opposite to $P$ with respect to $A_0$, so that $P\cap P^-=M$ and we denote by 
$U^-$ its unipotent radical, so that $P^-=MU^-$.
 We denote by 
$\D(A_M,P)_G$ or most of the time by $\D(A_M,P)$ (resp. $\D(A_M,P^-)$) the set of simple roots of $A_M$ for its action on the Lie algebra of $P$ (resp. $P^-$). We set $\D_0=\D(A_0,P_0)$, and when $P$ is a standard parabolic subgroup of $G$, we set 
\[\D_0^P=\D(A_0,M\cap P_0)_M\subset \D_0.\] 
 
For $\e>0$ we set 
\[A_M^-(P,\e)=\{a\in A_M, \ |\alpha(a)|\leq\e, \alpha \in \D(A_M,P)\}\]
and 
\[A_M^-(P^-,\e)=\{a\in A_M, \ |\alpha(a)|\leq\e, \alpha \in \D(A_M,P^-)\}\] so that 
\[a\in A_M^-(P,\e)\Longleftrightarrow a^{-1}\in A_M^-(P^-,\e).\]
Note that $A_M^-(P,1)\subset A_0^-(P_0,1)$. 
For $0<\e\leq 1$ we set \[A_{0,P}^-(\e)=\{a\in A_0, \ |\alpha(a)|\leq\e \ \mathrm{for} \ \alpha \in \D_0-\D_0^P\ \mathrm{and} 
\ \e<|\alpha(a)|\leq 1 \ \mathrm{for}\ \alpha \in \D_0^P \}.\] 

We denote by $K_0$ a maximal compact open subgroup in good position with respect to $(P_0,P_0^-)$ (see \cite[V.2.1 and V.5.2]{Renard} where the terminlogy there is "$K_0$ is adapted to $A_0$", in particular the Iwasawa decomposition $G=P_0K$ holds) with its family 
$\OI(G)$ of open subgroups $K$ having an Iwahori decomposition with respect to $(P,P^-)$ for any standard parabolic subgroup $P$ of $G$, and such that $K\cap M$ has the same property with respect to standard parabolic subgroups of $M$ (containing $P_0\cap M$). It is known that $\OI(G)$ form a basis of neighborhoods of the identity in $G$. For $K\in \OI(G)$ and $P=MU$ a standard parabolic subgroup of $G$ we set $K_U=K\cap U$, $K_{U^-}=K\cap U^-$ and $K_M=K\cap M$ so that 
$K=K_{U}K_MK_{U^-}$. For any 
$a\in  A_M^-(P,1)$ we have $a^{-1}K_{U^-}a\subset K_{U^-}$ and $aK_{U^-}a^{-1}\subset K_{U^-}$.

\subsection{Haar measures}
If $X$ a totally compact locally disconnected space, we denote by $\mathcal{C}_c^\infty(X)$ the space of (complex valued) compactly supported locally constant functions on $X$. If moreover $X$ is a group we denote by $\mathcal{C}^\infty(X)$ the space of smooth functions on $X$ (i.e. functions which are fixed on the right by a compact open subgroup of $X$) and 
$\mathcal{C}_c^\infty(X)\subset \mathcal{C}^\infty(X)$. We fix a right Haar measure $dh$ on any closed subgroup $H$ of $G$ normalized by the condition $\mu(H\cap K_0)=1$. We denote by $\d_H:H\rightarrow \R_{>0}$ the modulus character of $H$, which is determined by the fact that $\d_H dh$ is a left Haar measure on $H$. If $P'\subset P$ are two standard parabolic subgroups of $G$ then $\d_{P'}\equiv \d_P$ on $A_M$. Such choices fix a unique right invariant measure on $U\backslash G$ such that 
\[\int_G f(g) dg=\int_{U\backslash G} \int_U f(ug) du dg\] for any 
$f\in \mathcal{C}_c^\infty(G)$.
The following integration formulas are valid for any 
$f\in \mathcal{C}_c^\infty(U\backslash G)$ and we will use them freely:
\[\int_{U\backslash G} f(g) dg= \int_{M}\int_{K_0} f(mk)\d_P(m)^{-1} dk dm \] and 
\[\int_{U\backslash G} f(g) dg= \int_{M}\int_{U^-} f(mu^-)\d_P(m)^{-1} dm du^-.\]

\subsection{Smooth representations}\label{section smooth reps}

We say that $(\pi,V)$ is a smooth (complex) representation of $G$ if every $v\in V$ is fixed by some 
$K \in \OI(G)$. We say that it is moreover admissible if $V^K$ has finite dimension 
for all (equivalently some) $K\in \OI(G)$. If $\pi$ is a smooth representation of $G$, we denote by $\pi^\vee$ its smooth dual. 
We denote by $\psi$ a non-degenerate character of $U_0$ (\cite[1.2.Definition]{BHWh}), and if $P=MU\supseteq P_0$ we denote by 
$\psi_M$ its restriction to $U_0\cap M$ which is non-degenerate as well by \cite[2.2. Proposition]{BHWh}. We denote by \[(\rho_{G,\psi}^\vee,\Ind_{U_0}^G(\psi)) \ \mathrm{or} \ (\rho_{G,\psi}^\vee,\Wh(G,\psi))\] the representation by right translation of $G$ on the space of smooth (i.e. fixed on the right by some $K\in \OI(G)$) functions from $W:G\rightarrow \C$ satisfying $W(ug)=\psi(u)W(g)$ for 
$u\in U_0, \ g\in G$. We denote by \[(\rho_{G,\psi},\ind_{U_0}^G(\psi^{-1}))  \ \mathrm{or} \ (\rho_{G,\psi},\Wh_c(G,\psi^{-1}))\] the restriction of $\rho_{G,\psi^{-1}}^\vee$ to the $G$-invariant subspace of functions with compact support modulo $U_0$. This strange notation makes sense: the duality 
 \[\langle W',W \rangle_G=\int_{U_0\backslash G} W'(g)W(g)dg\] for $W'\in \ind_{U_0}^G(\psi^{-1})$ and $W\in \Ind_{U_0}^G(\psi)$ identifies 
 $\rho_{G,\psi}^\vee$ to $(\rho_{G,\psi})^\vee$. We will now omit the subscript $\psi$ in $\rho_{G,\psi}$ and most of the time also omit the subscript $G$ in $\rho_G$. We say that $\xi$ in the algebraic dual $V^*$ of a smooth representation $(\pi,V)$ of $G$ 
 is a $\psi$-Whittaker functional if \[\xi(\pi(u_0)v)=\psi(u_0)\xi(v)\] for $u_0\in U_0,\ v\in V$. For example the map 
 $\d_e:\Wh(G,\psi)\rightarrow \C$ defined by $W\mapsto W(e)$ (where $e$ is the neutral element of $G$) is a $\psi$-Whittaker functional on $\Wh(G,\psi)$ which restricts non trivially to any nonzero $G$-submodule of $\Wh(G,\psi)$.

\section{Preliminary results of Bushnell--Henniart and Delorme}

\subsection{Delorme's application of the second adjunction theorem}

 We recall here the second adjointness theorem for smooth representations due to J. Bernstein (which generalizes that of W. Casselman for admissible representations). We then explain a consequence of it due to P. Delorme. For $(\pi,V)$ a smooth (complex) representation of $G$ and $P=MU$ a standard parabolic subgroup of $G$, we denote by $(r_P(\pi),r_P(V))$ its non normalized Jacquet module and by $(J_P(\pi),J_P(V))$ its normalized Jacquet module (the second being the smooth $M$-module obtained by twisting the first by $\d_P^{-1/2}$). 
We recall that Bernstein proved (following Jacquet and Casselman in the admissible case) that if $(\pi,V)$ is a smooth representation of $G$ and if $K\in \OI(G)$, then the map $r_P$ sends $V^K$ to $r_P(V)^{K\cap M}$ surjectively (see 
\cite[VI.9.1 Théorème]{Renard}). More precisely we have Bernstein's second adjointness theorem as stated in \cite[Lemma 2.1]{DelormeCT} (in \cite[Lemma 2.1]{DelormeCT} the quantity $\e_K$ below is taken uniformly with respect to all smooth representations, but we don't need to know that this can be done). 

\begin{thm}\label{theorem Bernstein form 1}
Let $P$ be a standard parabolic subgroup of $G$ and $K\in \OI(G)$. Let $(\pi,V)$ be a smooth representation of $G$, then $r_P(\pi)^\vee\simeq r_{P-}(\pi^\vee)$ for any smooth representation $\pi$ of $G$. Moreover one can take the $M$-invariant duality $\langle\ , \ \rangle_P$ between $r_{P-}(\pi^\vee)$ and $r_P(\pi)$ identifying  $r_{P-}(\pi^\vee)$ to $r_{P}(\pi)^\vee$ to satisfy the following property:  there exists $0<\e_{K}<1$ such that for $(v,v^\vee)\in V^K\times (V^\vee)^K$ then  
\begin{equation}\label{equation bernstein duality 1} \langle r_P(\pi)(a)r_P(v),r_{P^-}(v^\vee)\rangle_P=\langle\pi(a)v,v^\vee\rangle \end{equation} for 
all $a\in A_M^-(P,\e_{K})$.
The duality $\langle \ , \ \rangle_P$ is uniquely characterized by $\langle \ , \ \rangle$ and the existence of $\e_K$ such that 
(\ref{equation bernstein duality 1}) holds for all $a\in A_M^-(P,\e_{K})$ for every $K\in \OI(G)$.
 If moreover $\pi$ is admissible, then $\langle \ , \ \rangle_P$ is already uniquely characterized by $\langle \ , \ \rangle$ and the fact that for $(v,v^\vee)\in V\times V^\vee$, there exists $0<\e<1$ (depending on $(v,v^\vee)$) such that 
(\ref{equation bernstein duality 1}) holds for all $a\in A_M^-(P,\e)$.
\end{thm}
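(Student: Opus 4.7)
The abstract isomorphism $r_P(\pi)^\vee \simeq r_{P^-}(\pi^\vee)$ is a reformulation of Bernstein's second adjointness theorem, which I would invoke directly; the substance lies in the construction and characterization of the explicit $M$-equivariant pairing $\langle\ ,\ \rangle_P$ realizing this isomorphism.

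The key technical input is the Jacquet--Casselman finiteness lemma: for every $K \in \OI(G)$, the projection $r_P$ sends $V^K$ onto $r_P(V)^{K\cap M}$, and there exists a compact open subgroup $K_U' \subseteq U$, depending only on $K$, such that any $v \in V^K$ with $r_P(v) = 0$ satisfies $\int_{K_U'}\pi(u)v\,du = 0$. Combined with the contraction $aK_Ua^{-1} \subseteq K_U$ for $a \in A_M^-(P,1)$ (the counterpart on $U$ of the fact recorded in the preliminaries for $U^-$), this allows the choice of $\e_K \in (0,1)$ such that $a^{-1}K_Ua \supseteq K_U'$ for every $a \in A_M^-(P,\e_K)$. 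I would then define the pairing on $K$-invariants by
\[\langle r_P(\pi)(a) r_P(v),\, r_{P^-}(v^\vee) \rangle_P := \langle \pi(a)v, v^\vee \rangle, \qquad (v,v^\vee) \in V^K \times (V^\vee)^K,\ a \in A_M^-(P,\e_K),\]
and extend to the full Jacquet modules using $M$-equivariance together with the fact that $r_P(\pi)(a)$ is invertible on $r_P(V)$.

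The main task is well-definedness, that is, the vanishing of $\langle \pi(a)v, v^\vee \rangle$ whenever $v \in V(U) \cap V^K$ and $a \in A_M^-(P,\e_K)$. Using the $K_U$-invariance of $v^\vee$, I would rewrite
\[\langle \pi(a)v, v^\vee \rangle = \vol(K_U)^{-1} \int_{K_U} \langle \pi(a)\pi(a^{-1}ua)v, v^\vee \rangle\,du,\]
then change variables $u \mapsto a^{-1}ua$ to convert this, up to a Jacobian, into $\bigl\langle \pi(a)\int_{a^{-1}K_Ua}\pi(u')v\,du',\, v^\vee \bigr\rangle$; since $a^{-1}K_Ua$ is a compact open subgroup containing $K_U'$, it splits into finitely many left $K_U'$-cosets, and each coset integral reduces to $\int_{K_U'}\pi(u)v\,du = 0$ after a translation. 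A symmetric argument using $K_{U^-}$-invariance of $v$ and the Jacquet--Casselman lemma for the opposite parabolic handles independence of the lift $v^\vee$. The $M$-equivariance is then immediate from the defining formula, and non-degeneracy follows by comparing the resulting $M$-equivariant map $r_{P^-}(V^\vee) \to r_P(V)^\vee$ with the abstract isomorphism from second adjointness on the generating family of $K$-invariants.

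For uniqueness in the general case, any pairing with the characterizing property is determined on all pairs $(r_P(v), r_{P^-}(v^\vee))$ with $v \in V^K$, $v^\vee \in (V^\vee)^K$, and these span the Jacquet modules by Jacquet--Casselman surjectivity as $K$ ranges over $\OI(G)$. In the admissible case, the function $a \mapsto \langle r_P(\pi)(a)\bar w,\bar v^\vee\rangle_P$ is, for each pair in the Jacquet modules, a finite linear combination of characters of $A_M$; two such combinations agreeing on the non-empty open cone $A_M^-(P,\e)$ must agree everywhere, so the weaker pointwise $\e$-condition already pins the pairing down. The main obstacle is the well-definedness verification, specifically obtaining a \emph{uniform} $\e_K$ depending only on $K$ rather than on the particular vector; this relies crucially on the uniform bound in the Jacquet--Casselman lemma.
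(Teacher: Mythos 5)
The decisive step in your construction --- the existence, for every $K\in \OI(G)$, of a compact open subgroup $K_U'\subseteq U$ \emph{depending only on $K$} such that every $v\in V^K$ with $r_P(v)=0$ is annihilated by $\int_{K_U'}\pi(u)\,du$ --- is not the classical Jacquet--Casselman lemma. That lemma produces a subgroup $U'$ depending on the individual vector $v$, and uniformity over $V^K$ follows from it only when $V^K$ is finite dimensional, i.e.\ in the admissible case. For a general smooth representation, which is the setting of the theorem, $V^K\cap V(U)$ (with $V(U')$ the subspace killed by averaging over $U'$) is an increasing union of the subspaces $V^K\cap V(U')$ with no elementary reason to stabilize: this uniform stabilization, and likewise the surjectivity of $V^K\rightarrow r_P(V)^{K\cap M}$ that you invoke, is precisely Bernstein's stabilization theorem, which is the hard core of second adjointness. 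So, as written, your argument is circular in the non-admissible case: the ``key technical input'' is essentially equivalent to the statement being proved. Once that input is granted by citing Bernstein, the remaining computation --- averaging over $K_U$, conjugating by $a$, splitting $a^{-1}K_Ua$ into $K_U'$-cosets, and the symmetric argument on the $U^-$-side, plus the (unaddressed but standard) compatibility of the prescriptions as $a$ and $K$ vary --- is the usual construction of the explicit pairing, but then nothing is gained over quoting the theorem directly.

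For comparison, the paper does not reprove existence at all: it quotes Bernstein's second adjointness in the form of \cite[Lemma 2.1]{DelormeCT} (see also \cite{Renard}), and only argues the two uniqueness assertions in Remark~\ref{rem A}: the equalities $r_P(V^K)=r_P(V)^{K\cap M}$ and $r_{P^-}((V^\vee)^K)=r_{P^-}(V^\vee)^{K\cap M}$ together with the bijectivity of $r_P(\pi)(a)$ on $r_P(V)^{K\cap M}$ pin the pairing down on $K$-invariants for every $K$, and in the admissible case a uniform $\e_K$ is recovered from the pointwise condition by taking the minimum of the $\e$'s over a basis of the finite-dimensional space $V^K\times (V^\vee)^K$. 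Your uniqueness argument in the general case is the same; in the admissible case your alternative --- two $A_M$-finite functions agreeing on the cone $A_M^-(P,\e)$ agree everywhere, i.e.\ injectivity of the germ map on $A_M$-finite functions as in \cite[Lemma 2.9]{LM} --- is valid and uses admissibility of the Jacquet modules rather than finite-dimensionality of $V^K$, but this does not repair the gap in the existence part described above.
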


\begin{rem}\label{rem A} The fact that $\langle \ , \ \rangle$ together with (\ref{equation bernstein duality 1}) characterize of the duality $\langle \ , \ \rangle_P$ follows from the following facts: for $K\in \OI(G)$ one has $r_P(V^K)=r_P(V)^{K\cap M}$, $r_{P^-}((V^\vee)^K)=r_{P^-}(V^\vee)^{K\cap M}$ and $r_P(\pi)(a)$ is a linear bijection of $r_P(V)^{K\cap M}$ for any $a\in A_M^-(P,\e_K)$ (one suffices). Together they imply that $\langle \ , \ \rangle_P$ is uniquely determined on $r_P(V)^{K\cap M}\times r_{P^-}(V^\vee)^{K\cap M}$ for all $K\in \OI(G)$, hence on $r_P(V)\times r_{P^-}(V^\vee)$. When $\pi$ is admissible then the weaker assumption gives the existence of an $\e_K$ by considering the minimal value of the positive numbers $\e$ obtained for the vectors in a basis of $V^K\times (V^\vee)^K$, hence is enough to characterize the duality $\langle \ , \ \rangle_P$. \end{rem}

It will be convenient to reformulate Theorem \ref{theorem Bernstein form 1} as follows.

\begin{corollary}\label{corollary Bernstein form 2}
Let $P$ be a standard parabolic subgroup of $G$ and let $(\pi,V)$ be a smooth representation of $G$, then for all $K\in \OI(G)$ there exists $\e_K>0$ such that 
\begin{equation}\label{equation bernstein duality 2} \langle r_{P^-}(v),r_P(\pi^\vee(a))r_{P}(v^\vee)\rangle_{P^-}=\langle v,\pi^\vee(a)v^\vee\rangle \end{equation} for all $(v,v^\vee)\in V^K\times (V^\vee)^K$ and  
$a\in A_M^-(P,\e_K)$.
The duality $\langle \ , \ \rangle_{P^-}$ is uniquely characterized by $\langle \ , \ \rangle$ and  
(\ref{equation bernstein duality 2}) for all $K\in \OI(G)$. When $\pi$ is admissible the duality $\langle \ , \ \rangle_{P^-}$ is already uniquely characterized by $\langle \ , \ \rangle$ and the fact that for $(v,v^\vee)\in V\times V^\vee$, there exists $0<\e<1$ (depending on $(v,v^\vee)$) such that 
(\ref{equation bernstein duality 2}) holds for all $a\in A_M^-(P,\e)$.
\end{corollary}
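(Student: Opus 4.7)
The plan is to derive this corollary as a direct reformulation of Theorem \ref{theorem Bernstein form 1}: I apply the theorem with the roles of $P$ and $P^-$ interchanged, and then use the $M$-invariance of the resulting pairing to move the $A_M$-action from the left factor of the pairing to the right. Since the corollary is genuinely a cosmetic rewriting, there should be no real obstacle; the only point worth checking carefully is that Bernstein's second adjointness theorem may be applied with the opposite parabolic $P^-$ in place of $P$, even though $P^-$ is not standard in the convention of the paper.

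First I would observe that although Theorem \ref{theorem Bernstein form 1} is phrased for a standard $P$, the underlying statement of Bernstein recalled in \cite[Lemma 2.1]{DelormeCT} applies to any parabolic subgroup of $G$; the assumption of standardness plays no role. Applying Theorem \ref{theorem Bernstein form 1} with $P$ replaced by $P^-$ (so that $(P^-)^-=P$) produces an $M$-invariant duality $\langle\ ,\ \rangle_{P^-}$ between $r_{P^-}(\pi)$ and $r_P(\pi^\vee)$, together with an $\e_K>0$, such that
\[\langle r_{P^-}(\pi)(b)\, r_{P^-}(v),\, r_P(v^\vee)\rangle_{P^-}=\langle \pi(b)v,v^\vee\rangle\]
for all $(v,v^\vee)\in V^K\times(V^\vee)^K$ and $b\in A_M^-(P^-,\e_K)$.

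Next I would use the $M$-invariance of $\langle\ ,\ \rangle_{P^-}$ to rewrite the left-hand side as $\langle r_{P^-}(v),\, r_P(\pi^\vee)(b^{-1})\, r_P(v^\vee)\rangle_{P^-}$, and dualize the right-hand side as $\langle \pi(b)v,v^\vee\rangle=\langle v,\pi^\vee(b^{-1})v^\vee\rangle$. Setting $a=b^{-1}$ and invoking the equivalence $a\in A_M^-(P,\e_K)\Longleftrightarrow a^{-1}\in A_M^-(P^-,\e_K)$ recorded in the Notations section, this identity is exactly (\ref{equation bernstein duality 2}).

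Finally, the two uniqueness statements follow by a direct adaptation of the argument of Remark \ref{rem A}. Since $A_M$ is central in $M$, the operator $r_P(\pi^\vee)(a)$ preserves and acts as a linear bijection on $r_P(V^\vee)^{K\cap M}$ for every $a\in A_M$; combined with the Jacquet surjectivities $r_{P^-}(V^K)=r_{P^-}(V)^{K\cap M}$ and $r_P((V^\vee)^K)=r_P(V^\vee)^{K\cap M}$ from \cite[VI.9.1 Théorème]{Renard}, relation (\ref{equation bernstein duality 2}) determines $\langle\ ,\ \rangle_{P^-}$ on each $r_{P^-}(V)^{K\cap M}\times r_P(V^\vee)^{K\cap M}$ and hence on the whole space. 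In the admissible case one reduces, exactly as in Remark \ref{rem A}, to a finite basis of $V^K\times (V^\vee)^K$ and takes the minimum of the finitely many associated $\e$'s to recover the stronger uniform characterization.
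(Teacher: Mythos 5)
Your proof is correct and is essentially the derivation the paper intends: the corollary is stated there without proof as a mere reformulation of Theorem \ref{theorem Bernstein form 1}, obtained exactly as you do by exchanging the roles of $P$ and $P^-$ (second adjointness holds for any parabolic and its opposite, so the standardness convention is harmless, as you rightly flag), using $M$-invariance to transfer the $A_M$-action to the $r_P(v^\vee)$ side via $a=b^{-1}$ and the equivalence $a\in A_M^-(P,\e)\Longleftrightarrow a^{-1}\in A_M^-(P^-,\e)$. Your uniqueness argument is the same adaptation of Remark \ref{rem A} (surjectivity of $r_{P^-}$ and $r_P$ on $K$-invariants plus bijectivity of $r_P(\pi^\vee)(a)$ on $r_P(V^\vee)^{K\cap M}$, with the finite-basis reduction in the admissible case), so nothing is missing.
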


Delorme proves in \cite[Theorem 3.4]{DelormeCT} a more general form of the following result.

\begin{thm}\label{theorem Jacquet module of Whittaker functionals}
Let $P=MU$ be a standard parabolic subgroup of $G$. Let $(\pi,V)$ be a smooth representation of $G$ and let $\xi\in V^*$ a $\psi$-Whittaker functional. Then there is a unique $\psi_M$-Whittaker functional $r_{P^-}(\xi)$ in 
$r_{P^-}(V)^*$ such that for any $K\in \OI(G)$, there exists $0<\e_K<1$ such that 
\begin{equation}\label{equation jacquet module of WF} \langle r_P(\pi)(a)r_P(v),r_{P^-}(\xi)\rangle=\langle \pi(a)v, \xi \rangle\end{equation} for all 
$a\in A_M^-(P,\e_K)$ and $v\in V^K$. When $\pi$ is admissible, the map $r_{P^-}(\xi)$ is already uniquely determined by the existence for any $v\in V$, of $0<\e<1$ such that (\ref{equation jacquet module of WF}) holds for all $a\in A_M^-(P,\e)$
\end{thm}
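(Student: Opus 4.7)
The plan is to construct $r_{P^-}(\xi)$ directly by solving the equation (\ref{equation jacquet module of WF}), following the same philosophy as Bernstein's second adjointness (Theorem~\ref{theorem Bernstein form 1}) but with the $\psi$-equivariance of $\xi$ substituting for the smoothness of a dual vector. The technical heart is the following vanishing lemma: for every $K \in \OI(G)$ there exists $\e_K > 0$ such that $\xi(\pi(a) v) = 0$ whenever $v \in V^K$ satisfies $r_P(v)=0$ and $a \in A_M^-(P, \e_K)$. To prove it, I would invoke the uniform Jacquet lemma, giving a compact open subgroup $U'_K \subseteq U$ (depending only on $K$) for which $v \in V^K$ lies in the kernel of $r_P$ iff $\int_{U'_K} \pi(u) v \, du = 0$. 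Translating by $\pi(a)$ and changing variables $u \mapsto a u a^{-1}$ produces $\int_{a U'_K a^{-1}} \pi(u) \pi(a) v \, du = 0$. Applying $\xi$ and exploiting both the inclusion $U \subseteq U_0$ and the $\psi$-equivariance of $\xi$ on $U_0$, this becomes
\[
\bigl(\textstyle\int_{a U'_K a^{-1}} \psi(u) \, du\bigr)\, \xi(\pi(a) v) = 0;
\]
for $\e_K$ small enough the translate $a U'_K a^{-1}$ lies in the open subgroup of $U$ on which $\psi$ is trivial, so the integral equals $\vol(a U'_K a^{-1}) > 0$, forcing $\xi(\pi(a) v) = 0$.

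After shrinking $\e_K$ so that Remark~\ref{rem A}'s bijectivity of $r_P(\pi)(a)$ on $r_P(V)^{K \cap M}$ also holds, I would define, for $\bar v \in r_P(V)^{K \cap M}$,
\[
r_{P^-}(\xi)(\bar v) := \xi(\pi(a) v_a),
\]
where $a \in A_M^-(P, \e_K)$ is any choice, $\bar v_a := r_P(\pi)(a)^{-1} \bar v$, and $v_a \in V^K$ is any lift of $\bar v_a$ (which exists by surjectivity of $r_P$ on $K$-invariants). Lift-independence is immediate from the vanishing lemma. For independence of $a$ I would compare two choices $a_1, a_2 \in A_M^-(P, \e_K)$ via the difference $w := \pi(a_1) v_{a_1} - \pi(a_2) v_{a_2}$, which has zero $r_P$-image and lies in $V^{K'}$ for some small $K' \in \OI(G)$ contained in $a_1 K a_1^{-1} \cap a_2 K a_2^{-1}$. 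The vanishing lemma at level $K'$ gives $\xi(\pi(b) w) = 0$ for $b$ in the cone $A_M^-(P, \e_{K'})$, and one then propagates this to $b = 1$ by invoking the standard finiteness property of $b \mapsto \xi(\pi(b) w)$ on $A_M$: a finite function on the lattice $A_M/A_M^1$ vanishing on a sub-cone of full rank must vanish identically. Compatibility across different $K$'s is automatic from the definitions.

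The remaining verifications are formal. For the $\psi_M$-Whittaker property, $a$ and $u_M$ commute since $A_M$ is central in $M$, so $\pi(u_M) v_a$ is a lift of $r_P(\pi)(a)^{-1} r_P(\pi)(u_M) \bar v$ and a direct computation gives
\[
r_{P^-}(\xi)(r_P(\pi)(u_M) \bar v) = \xi(\pi(u_M)\pi(a) v_a) = \psi(u_M)\, \xi(\pi(a) v_a) = \psi_M(u_M)\, r_{P^-}(\xi)(\bar v),
\]
using $\psi_M = \psi|_{U_0 \cap M}$. Uniqueness under the defining relation is again Remark~\ref{rem A}, and the admissible-case strengthening follows by taking $\e_K$ as the minimum of the $\e$'s attached to a basis of the finite-dimensional $V^K$.

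The principal subtlety is the independence of $a$ in the construction: since $\pi(a)$ does not preserve $V^K$, one is forced to descend to a strictly smaller level $K' \in \OI(G)$ at which the $\e_{K'}$ from the vanishing lemma may be smaller than $\e_K$. Extracting vanishing at the identity from vanishing on a cone in $A_M$ is where the representation-theoretic finiteness of Whittaker-type functions, beyond the purely formal Bernstein set-up, becomes essential.
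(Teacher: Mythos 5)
Your overall architecture (vanishing lemma for $\ker r_P\cap V^K$ on a cone, then define the functional by transport along deep $a$) is reasonable, and the vanishing lemma itself is fine granted the uniform Jacquet lemma, which is a Bernstein-stabilization-level input of the same depth as what the paper already assumes. The genuine gap is the well-definedness step, i.e.\ independence of $a$. For $w=\pi(a_1)v_{a_1}-\pi(a_2)v_{a_2}$ your vanishing lemma applies only at a smaller level $K'$ and therefore only gives $\xi(\pi(b)w)=0$ for $b\in A_M^-(P,\e_{K'})$, where $\e_{K'}$ is not controlled by $\e_K$ (it depends on $a_1,a_2$ through $K'$); to return to $b=e$ you invoke $A_M$-finiteness of $b\mapsto\xi(\pi(b)w)$, and that claim is false. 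The theorem is stated for an arbitrary smooth $(\pi,V)$, where no finiteness of any kind is available, and it already fails in the admissible case: take $V\subset\Wh(G,\psi)$ and $\xi=\d_e$, so that $\xi(\pi(b)w)=w(b)$; a Whittaker function restricted to $A_M$ vanishes as soon as some $|\alpha(b)|$ is large (exactly the observation used in the proof of Theorem \ref{theorem asymptotic expansion}), hence is not a finite function unless it vanishes on $A_M$. The finiteness that does exist lives on the Jacquet-module side, and only for admissible $\pi$ — distinguishing the two is precisely the point of the germ map $\Xi_P$ of Lapid--Mao. Since verifying (\ref{equation jacquet module of WF}) with a single $\e_K$ for all $v\in V^K$ amounts to the same comparison of values at different depths, and your ``automatic'' compatibility across levels $K$ has the identical depth-mismatch, the construction is not established.

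For comparison: the paper does not reprove existence at all — it quotes Delorme's Theorem 3.4 of \cite{DelormeCT}, whose proof resolves exactly this uniformity issue by working with the operators $e_K\pi(a)e_K$ on $V^K$ and the second adjointness/stabilization package, so that the identity holds on one cone $A_M^-(P,\e_K)$ for all $v\in V^K$ simultaneously rather than being assembled germ by germ; your uniqueness argument and the admissible-case refinement coincide with the paper's remark following the theorem (surjectivity $r_P(V^K)=r_P(V)^{K\cap M}$ plus bijectivity of $r_P(\pi)(a)$ on $r_P(V)^{K\cap M}$, and a basis of $V^K$ in the admissible case). To repair your existence argument you would need to prove independence of $a$ for $a_1,a_2$ ranging in a fixed cone attached to $K$ — e.g.\ by noting that for $K_{U_0}\subset\Ker\psi$ and $a$ in the cone one has $\xi(\pi(a)v)=\xi(e_{K\cap U}\pi(a)v)$ and exploiting the stabilized decomposition of $V^K$ under these operators — not by analytic continuation of $b\mapsto\xi(\pi(b)w)$ from a deeper cone. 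The same caveat applies to your $\psi_M$-equivariance check, where the lift $\pi(u_M)v_a$ again lives at a smaller level.
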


\begin{rem}
Uniqueness of $r_{P^-}(\xi)$ again follows from the facts that $r_P(V^K)=V^{K\cap M}$ and that $r_P(\pi)(a)$ is a linear bijection of $r_P(V)^{K\cap M}$ for any $a\in A_M^-(P,\e_K)$. In the admissible case the weaker assumption characterizes the duality by the same argument as in Remark \ref{rem A}.
\end{rem}

\textit{From now on, for every $K\in \OI(G)$, we fix $0<\e_K<1$ such that the statements of Corollary \ref{corollary Bernstein form 2} holds for $V=\Wh(G,\psi)$ with this choice of $\e_K$, and that of Theorem \ref{theorem Jacquet module of Whittaker functionals} also holds with this choice of $\e_K$. (We recall once again, though we shall not use it, that by \cite[Lemma 2.1]{DelormeCT} there is a choice of $\e_K$ which holds for all smooth representations together).}

\subsection{The constant term map}\label{section constant term map}

Thanks to Theorem \ref{theorem Jacquet module of Whittaker functionals} Delorme defined in \cite[Definition 3.12]{DelormeCT} a constant term map from 
$\Wh(G,\psi)$ to $\Wh(M,\psi_M)$. 

\begin{definition}
For $P=MU$ a standard parabolic subgroup of $G$ and $W\in \Wh(G,\psi)$, we set 
\[W_P(m)=\d_P^{-1/2}(m)\langle r_P(\rho^\vee(m)W),r_{P^-}(\d_e)\rangle\] for $m\in M$. 
\end{definition}

The map $W_P$ belongs to $\Wh(M,\psi_M)$ and $D_P:W\mapsto W_P$ is a $P$-module homomorphism from $(\Wh(G,\psi),\d_P^{-1/2}\rho^\vee)$ to $\Wh(M,\psi_M)$ where the action of $U$ (resp. $M$) on $\Wh(M,\psi_M)$ is trivial (resp. given by $\rho_M^\vee$), in other words $D_P$ induces an $M$-module homomorphism 
\[\overline{D}_P:J_P(\Wh(G,\psi))\rightarrow \Wh(M,\psi).\] We call $W_P$ the constant term of $W$ along $P$. Delorme then shows in \cite[Proposition 3.14]{DelormeCT} that 
the constant term map $D_P$ is characterized as follows. We recall his proof in view of Corollary \ref{corollary admissible subs of W} below.

\begin{prop}\label{proposition characterization of the constant term}
The constant term map $D_P:\Wh(G,\psi)\rightarrow\Wh(M,\psi_M)$ is characterized by the property that it is the unique 
$P$-module homomorphism from $(\d_P^{-1/2}\rho_G^\vee,\Wh(G,\psi))$ to $\Wh(M,\psi_M)$ for $U$ acting trivially and $M$ by $\rho_M^\vee$ on 
 $\Wh(M,\psi_M)$, such that for any $K\in \OI(G)$ we have 
\begin{equation}\label{equation characterization of the constant term} \d_P(a)^{1/2}W_P(a)=W(a)\end{equation} for all $a\in A_M^-(P,\e_K)$ and $W\in \Wh(G,\psi)^K$.
\end{prop}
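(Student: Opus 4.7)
The plan is to split the proposition into two pieces: first verify that $D_P$ itself has the stated properties, then show uniqueness by using $M$-equivariance together with Bernstein's second adjointness and Jacquet's lemma.

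For the properties of $D_P$, smoothness of $W_P$ follows from the smoothness of the $M$-action $m\mapsto r_P(\rho_G^\vee)(m)r_P(W)$ on the Jacquet module, and $\psi_M$-equivariance follows because $r_{P^-}(\delta_e)$ is a $\psi_M$-Whittaker functional (Theorem \ref{theorem Jacquet module of Whittaker functionals}) together with the fact that $\delta_P$ is trivial on $U_0\cap M$. The $P$-equivariance of $D_P$ is a direct computation: $U$-invariance reduces, after the bookkeeping $\rho_G^\vee(m)\rho_G^\vee(u)=\rho_G^\vee(mum^{-1})\rho_G^\vee(m)$ with $mum^{-1}\in U$, to the fact that $U$ acts trivially on $r_P(V)$, and $M$-equivariance with the $\delta_P^{-1/2}$-twist is immediate from the definition and from $\delta_P$ being a character of $M$. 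For the pointwise equation (\ref{equation characterization of the constant term}) I apply Theorem \ref{theorem Jacquet module of Whittaker functionals} with $(\pi,V,\xi)=(\rho_G^\vee,\Wh(G,\psi),\delta_e)$: for $W\in\Wh(G,\psi)^K$ and $a\in A_M^-(P,\e_K)$,
\[
\langle r_P(\rho_G^\vee)(a)\,r_P(W),\,r_{P^-}(\delta_e)\rangle=\langle \rho_G^\vee(a)W,\delta_e\rangle=(\rho_G^\vee(a)W)(e)=W(a),
\]
which is exactly $\delta_P(a)^{1/2}W_P(a)=W(a)$.

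For uniqueness, let $\tilde D_P$ be any other $P$-homomorphism with the same properties and set $E=\tilde D_P-D_P$. Then $E$ is a $P$-homomorphism killing $U$-translations, satisfying $E(W)(a)=0$ for every $a\in A_M^-(P,\e_K)$ and $W\in\Wh(G,\psi)^K$. Since $U$ acts trivially on the target, $E$ descends to an $M$-equivariant map $\bar E\colon J_P(\Wh(G,\psi))\to \Wh(M,\psi_M)$. Putting $\Phi(\xi):=\bar E(\xi)(e)$, the $M$-equivariance yields $\bar E(\xi)(m)=\Phi(J_P(\rho_G^\vee)(m)\,\xi)$, so it suffices to show $\Phi\equiv 0$. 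The vanishing hypothesis translates to
\[
\Phi\!\left(J_P(\rho_G^\vee)(a)\,r_P(v)\right)=0 \quad\text{for every }K\in\OI(G),\ v\in\Wh(G,\psi)^K,\ a\in A_M^-(P,\e_K).
\]

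At this point I invoke the two inputs recalled in Remark \ref{rem A}: Bernstein's surjectivity gives $r_P(\Wh(G,\psi)^K)=J_P(\Wh(G,\psi))^{K\cap M}$, and Jacquet's lemma tells us $J_P(\rho_G^\vee)(a)$ is a linear bijection of $J_P(\Wh(G,\psi))^{K\cap M}$ for any $a\in A_M^-(P,\e_K)$. Fixing one such $a$, these together force $\Phi$ to vanish on the whole subspace $J_P(\Wh(G,\psi))^{K\cap M}$, for every $K\in \OI(G)$; since $\{K\cap M:K\in\OI(G)\}$ is a basis of neighborhoods of the identity in $M$, these subspaces exhaust the smooth $M$-module $J_P(\Wh(G,\psi))$, whence $\Phi\equiv 0$. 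The main obstacle is exactly this last combination: the hypothesis vanishing is a priori only on the negative cone $A_M^-(P,\e_K)$, and pure $M$-equivariance would only translate it along $A_M$-orbits, leaving most of $M$ uncovered; it is the interplay of Bernstein's surjectivity with Jacquet's bijectivity that promotes the restricted vanishing into vanishing on a full $(K\cap M)$-isotypic piece of the Jacquet module, after which smoothness does the rest.
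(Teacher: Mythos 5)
Your proof is correct and follows essentially the same route as the paper: both reduce the uniqueness to the evaluation-at-$e$ functional on the Jacquet module and exploit $M$-equivariance together with the vanishing on the cone $A_M^-(P,\e_K)$. The only difference is packaging: where the paper identifies the functional attached to a competing map $D'$ with $r_{P^-}(\d_e)$ by quoting the uniqueness part of Theorem~\ref{theorem Jacquet module of Whittaker functionals}, you show directly that the functional attached to the difference vanishes, using the surjectivity $r_P(\Wh(G,\psi)^K)=r_P(\Wh(G,\psi))^{K\cap M}$ and the bijectivity of $a\in A_M^-(P,\e_K)$ on $K\cap M$-invariants --- exactly the facts recalled in Remark~\ref{rem A} that underlie that uniqueness statement.
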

\begin{proof}
Suppose that $D':\Wh(G,\psi)\rightarrow \Wh(M,\psi_M)$ also has the properties which we expect to characterize $D_P$. Then the map $W\mapsto D'(W)(e)$ factors through a map $\psi_M$-Whittaker functional $\eta\in r_P(V)^*$: 
\[D'(W)(e)=\langle r_P(W),\eta \rangle\] for all $W\in \Wh(G,\psi)$. Then for fixed $K\in \OI(G)$ and $W\in \Wh(G,\psi)^K$, Equation (\ref{equation characterization of the constant term}) for $D'(W)$ tells that \[\langle \rho^\vee(a)W, \d_e \rangle=W(a)=\d_P(a)^{1/2}D'(W)(a)= (\d_P(a)^{1/2}\rho_M^\vee(a)D'(W))(e)\] for 
$a\in A_M^-(P,\e_K)$. However by the $M$-equivariance property of $D'$ 
\[(\d_P(a)^{1/2}\rho_M^\vee(a)D'(W))(e)=D'(\rho^\vee(a)W)(e)=\langle r_P(\rho^\vee(a)W), \eta \rangle.\] Hence by Theorem \ref{theorem Jacquet module of Whittaker functionals} we have $\eta=j_{P^-}(\d_e)$, so \[D'(W)(e)=D_P(W)(e)\] for  all $W\in \Wh(G,\psi)$. As 
both maps $D'$ and $D_P$ have the same equivariance property under $M$, we deduce that $D'(W)=D_P(W)$ for all 
$W\in \Wh(G,\psi)$.
\end{proof}

\begin{rem}
Of course in the statement of Proposition \ref{proposition characterization of the constant term} we can take $K$ varying in any subset of $\OI(G)$ which is still a basis of neighborhoods of the identity in $G$ (because we can do so in the statements of Theorem \ref{theorem Bernstein form 1}, Corollary \ref{corollary Bernstein form 2} and Theorem \ref{theorem Jacquet module of Whittaker functionals}), which we will do in the proof of Theorem \ref{theorem BH=D}.
\end{rem}

We end this section by giving a version of Proposition \ref{proposition characterization of the constant term} for admissible submodules of $\Wh(G,\psi)$.

\begin{corollary}\label{corollary admissible subs of W}
Let $V$ be an admissible submodule of $\Wh(G,\psi)$. The retriction to $V$ of the constant term map $D_P$ is characterized by the property that it is the unique 
$P$-module homomorphism from $(\d_P^{-1/2}\rho_G^\vee,V)$ to $\Wh(M,\psi_M)$ for $U$ acting trivially and $M$ by $\rho_M^\vee$ on 
 $\Wh(M,\psi_M)$, such for any $W\in V$, there is $\e>0$ such that 
\begin{equation}\label{equation characterization of the constant term} \d_P(a)^{1/2}W_P(a)=W(a)\end{equation} for $a\in A_M^-(P,\e)$.
\end{corollary}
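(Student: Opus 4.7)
The plan is to mimic the proof of Proposition \ref{proposition characterization of the constant term} almost verbatim, but replacing the uniform-in-$K$ argument by the pointwise-in-$W$ characterization, which is legitimate precisely because $V$ is admissible and so the second (weaker) part of Theorem \ref{theorem Jacquet module of Whittaker functionals} is available.

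First I would check existence: since the equation of Proposition \ref{proposition characterization of the constant term} is satisfied by $D_P$ for all $W\in\Wh(G,\psi)^{K}$ and all $a\in A_M^-(P,\e_K)$, it is in particular satisfied for any $W\in V$ by taking $K\in\OI(G)$ with $W\in V^K$ and setting $\e=\e_K$. So $D_P|_V$ indeed has the required pointwise property, and of course inherits the $P$-equivariance.

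For uniqueness, I would let $D':V\rightarrow \Wh(M,\psi_M)$ be another such $P$-module homomorphism. Because $U$ acts trivially on $\Wh(M,\psi_M)$, the map $W\mapsto D'(W)(e)$ descends to a linear form $\eta$ on $r_P(V)$, and it is a $\psi_M$-Whittaker functional by the $M$-equivariance of $D'$ and the fact that $D'(W)$ lies in $\Wh(M,\psi_M)$. Now for any $W\in V$, by hypothesis there exists $\e>0$ such that
\[\langle \rho^\vee(a)W,\d_e\rangle=W(a)=\d_P(a)^{1/2}D'(W)(a)=\d_P(a)^{1/2}(\rho_M^\vee(a)D'(W))(e)\]
for $a\in A_M^-(P,\e)$. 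Using the $M$-equivariance of $D'$ on the right-hand side, this rewrites as
\[\langle \pi(a)W,\d_e\rangle=\langle r_P(\pi)(a)r_P(W),\eta\rangle\]
for $a\in A_M^-(P,\e)$, where I write $\pi$ for the action of $G$ on $V$.

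At this point the key step is to invoke the admissible case of Theorem \ref{theorem Jacquet module of Whittaker functionals}: since $V$ is admissible, the $\psi_M$-Whittaker functional $r_{P^-}(\d_e)$ attached to $\d_e|_V$ is already uniquely characterized by exactly this pointwise-in-$W$ condition, so $\eta=r_{P^-}(\d_e)$. Hence $D'(W)(e)=D_P(W)(e)$ for every $W\in V$, and the $M$-equivariance common to $D'$ and $D_P|_V$ forces $D'=D_P|_V$. The only delicate point is verifying that $\eta$ is well-defined on all of $r_P(V)$ (not just on some $K\cap M$-fixed piece), which is immediate from the factorization through $r_P$ and the triviality of the $U$-action on $\Wh(M,\psi_M)$; everything else is a direct transcription of the proof of Proposition \ref{proposition characterization of the constant term} with the admissibility hypothesis doing the extra work.
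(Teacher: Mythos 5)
Your proof is correct and follows essentially the same route as the paper: the paper's proof simply states that the argument of Proposition \ref{proposition characterization of the constant term} goes through with $V$ in place of $\Wh(G,\psi)$, appealing to the admissible case of Theorem \ref{theorem Jacquet module of Whittaker functionals} at the end, which is exactly what you spell out (including the easy existence check and the well-definedness of $\eta$ on $r_P(V)$).
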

\begin{proof}
The proof of Proposition \ref{proposition characterization of the constant term} goes through with $V$ instead of the full representation $\Wh(G,\psi)$, except that we appeal to the admissible version of Theorem \ref{theorem Jacquet module of Whittaker functionals} at the end of it.
\end{proof}

\subsection{An isomorphism of Bushnell--Henniart}\label{section BH iso}

We recall from the introduction that $\Wh_c(G,\psi)$ is the $G$-stable subspace of $\Wh(G,\psi)$ with consisting of functions 
compactly supported modulo $U_0$, and from Section \ref{section smooth reps} that we denote by $\rho$ the action of $G$ on $\Wh_c(G,\psi)$ by right translation. In \cite{BHWh}, for $P=MU$ a standard parabolic subgroup of $G$, Bushnell and Henniart identify the Jacquet module of $\Wh_c(G,\psi)$ with respect to $P^-$ to $\Wh_c(M,\psi_M)$.  We recall their result (\cite[2.2 Theorem]{BHWh}), which moreover gives a characterization of the map $W\mapsto J_{P^-}(W)$ via their identification.

\begin{thm}\label{theorem BH isomorphism}

Let $P=MU$ be a standard parabolic subgroup of $G$, and turn $\Wh_c(M,\psi_M)$ into a $P^-$-module by making 
$U^-$ act trivially on $\Wh_c(M,\psi_M)$ and $M$ act by $\rho_M$. There is a unique $P^-$-module homomorphism map 
$BH_{P^-}:(\Wh_c(G,\psi^{-1}),\d_P^{1/2}\rho)\rightarrow \Wh_c(M,\psi_M^{-1})$ such that if $U_c^-$ is a compact open subgroup of $U^-$ and 
$W\in \Wh_c(G,\psi^{-1})^{U_c^-}$ has support in $PU_c^-$, then 
\[BH_{P^-}(W)=\vol(U_c^-)\d_{P}^{-1/2}W_{|M}.\]
The map $BH_{P^-}$ is surjective and induces a $M$-module isomorphism 
\[\overline{BH}_{P^-}:J_{P^-}(W)\mapsto BH_{P^-}(W)\] between $J_{P^-}(\Wh_c(G,\psi^{-1}))$ and
$\Wh_c(M,\psi_M^{-1})$.
\end{thm}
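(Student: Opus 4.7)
The approach is to construct $BH_{P^-}$ by defining it first on the explicit subspace
\[ V(P^-) := \{W\in \Wh_c(G,\psi^{-1}) : \exists\, U_c^-\subseteq U^- \text{ compact open, } W\in \Wh_c(G,\psi^{-1})^{U_c^-},\ \supp(W)\subseteq PU_c^-\}, \]
via the prescribed formula, and then extending by $P^-$-equivariance to all of $\Wh_c(G,\psi^{-1})$. Well-definedness on $V(P^-)$ rests on the fact that the multiplication map $P\times U^-\to G$ is an open embedding onto the big cell $PU^-$; consequently, for $W$ supported in $PU_c^-$ and any $m\in M$, $u^-\in U^-$ one has $W(mu^-)\neq 0$ only if $u^-\in U_c^-$. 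Averaging $W$ over a larger compact open $U_c^{-\prime}\supseteq U_c^-$ therefore only picks up its $U_c^-$-values, and the ratio of Haar volumes exactly cancels the apparent dependence on $U_c^-$ in the expression $\vol(U_c^-)\delta_P^{-1/2}W|_M$.

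The key step is to prove that $V(P^-)$ generates $\Wh_c(G,\psi^{-1})$ as a $P^-$-module, which will simultaneously yield uniqueness and a well-defined extension of $BH_{P^-}$. Given $W\in \Wh_c(G,\psi^{-1})$ right-$K$-invariant for some $K=K_UK_MK_{U^-}\in\OI(G)$, the inclusion $U\subseteq U_0$ (which follows from $P_0\subseteq P$) gives
\[ U_0 K = U_0 K_U K_M K_{U^-} \subseteq U_0 M K_{U^-} = P K_{U^-}, \]
so that any piece of $W$ supported in a single $U_0 g K$ becomes, after applying $\rho(g^{-1})$ and possibly replacing $K_{U^-}$ by a suitable conjugate-and-average of a smaller compact open subgroup of $U^-$, an element of $V(P^-)$. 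A partition of the compact-mod-$U_0$ support of $W$ into finitely many such $U_0gK$-pieces expresses $W$ as a finite $P^-$-linear combination of elements of $V(P^-)$.

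The $P^-$-equivariance of the formula on $V(P^-)$, with source twisted by $\delta_P^{1/2}$, is a direct computation tracking the Haar-volume scaling under $M$-conjugation of $U_c^-$: for $u^-\in U^-$, enlarging $U_c^-$ to contain $u^-$ shows that $\rho(u^-)W$ and $W$ have the same $M$-restriction, so the $U^-$-action is trivial; for $m\in M$, the scaling $\vol(mU_c^-m^{-1})/\vol(U_c^-)$ combines with the translation $W|_M(\cdot)\mapsto W|_M(\cdot\, m)$ to produce exactly the $\delta_P^{1/2}\rho_M$-twist on the target. Surjectivity then follows from the spanning step: given $f\in\Wh_c(M,\psi_M^{-1})$ and $U_c^-$ small enough, the function defined by
\[ W(umu_c^-) = \psi^{-1}(u)\,\delta_P(m)^{1/2}\,\vol(U_c^-)^{-1}f(m), \qquad u\in U,\ m\in M,\ u_c^-\in U_c^-, \]
and by $0$ outside $PU_c^-$, satisfies the Whittaker relation (because $\psi$ is a character of $U_0$ so $\psi(m_0 u m_0^{-1})=\psi(u)$ for $m_0\in U_0\cap M$, $\delta_P$ is trivial on the unipotent $U_0\cap M$, and $\psi|_{U_0\cap M}=\psi_M$), lies in $V(P^-)$, and satisfies $BH_{P^-}(W)=f$. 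Descent to $J_{P^-}$ follows since $\ker(BH_{P^-})$ coincides with $\mathrm{span}\{\rho(u^-)W-W : u^-\in U^-,\, W\in\Wh_c(G,\psi^{-1})\}$, the inclusion $\supseteq$ being automatic from the $U^-$-triviality and the reverse reducing, by the spanning step, to an inspection on $V(P^-)$.

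The main obstacle is the spanning step: one must carefully track how the Iwahori decomposition of $K$ interacts with the compact-mod-$U_0$ support and right-$K$-invariance, so that the conjugate-and-averaged compact open subgroup of $U^-$ actually preserves the translated piece while its support fits inside $PU_c^-$. A related subtlety is identifying $\ker(BH_{P^-})$ with the $U^-$-coinvariance relations, which in a more general smooth $G$-module setting would require a geometric-lemma or Rodier-germ argument exploiting the non-degeneracy of $\psi$ to kill contributions from non-open cells in a Bruhat stratification $P^-\backslash G/U_0$; here the explicit spanning description makes the computation essentially direct.
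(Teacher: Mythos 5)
The paper does not prove this statement at all: it is quoted verbatim from Bushnell--Henniart \cite[2.2 Theorem]{BHWh}, so the comparison must be with their argument, and against that your proposal has a fatal gap at its declared key step. The claim that $V(P^-)$ generates $\Wh_c(G,\psi^{-1})$ as a $P^-$-module is false. Every element of $V(P^-)$ is supported in the open cell $PU^-=PP^-$, and this open cell is stable under right translation by $P^-$ (and under left translation by $U_0\subset P$), so the $P^-$-span of $V(P^-)$ consists entirely of functions supported in $PU^-$; yet for any proper $P$ there are nonzero $W\in\Wh_c(G,\psi^{-1})$ supported in $U_0gK$ with $g$ outside the open cell (already for $G=\SL_2$, $P=P_0$, take $g=w_0$). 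Your reduction is also internally broken: from $\mathrm{supp}(W)\subset U_0gK$ you get $\mathrm{supp}(\rho(g^{-1})W)=U_0gKg$, not $U_0K$ (right translation cannot move the left coset $U_0g$ back to $U_0$ — that would need left translation, which is not part of the module structure and destroys the $\psi$-equivariance), and in any case translating by a general $g\notin P^-$ is not available when you intend to extend by $P^-$-equivariance. Moreover, even if a generating set were available, generation yields uniqueness but not existence: extending a map from a generating subset "by equivariance" requires verifying that all relations are respected, which is exactly the content you have not supplied. The same gap sinks your identification of $\Ker(BH_{P^-})$ with the $U^-$-coinvariance relations, i.e.\ the injectivity of $\overline{BH}_{P^-}$, which is the hardest part of the theorem.

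What the missing content amounts to is precisely the point you dismiss in your final sentence. The functions supported off the open cell are not in the $P^-$-span of $V(P^-)$; rather, their images in the $U^-$-coinvariants vanish, and proving this requires the non-degeneracy of $\psi$. The Bushnell--Henniart proof filters $\ind_{U_0}^G(\psi^{-1})\rest_{P^-}$ by support along the finitely many double cosets in $U_0\backslash G/P^-$, computes the $U^-$-coinvariants of each graded piece, and shows that every non-open cell contributes zero because for such a cell some root subgroup of $U_0$ on which $\psi$ is nontrivial is conjugated into $U^-$ (a Casselman--Shalika/Rodier-type vanishing argument); only the open-cell piece survives, and it is exactly there that your well-definedness computation, the $\delta_P^{1/2}$ bookkeeping under $M$-conjugation of $U_c^-$ (which you do carry out correctly), and your explicit preimage construction for surjectivity are valid. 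So the correct parts of your proposal are the normalization and surjectivity computations on $V(P^-)$; the existence of $BH_{P^-}$ on all of $\Wh_c(G,\psi^{-1})$, its uniqueness, and the isomorphism statement on $J_{P^-}$ all still require the cell-by-cell coinvariant computation that your spanning shortcut was meant to avoid.
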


\section{Surjectivity of the constant term map}

We now have the map $BH_{P^-}:\Wh_c(G,\psi)\rightarrow \Wh_c(M,\psi_M)$ which induces 
an $M$-module isomorphism $\overline{BH}_{P^-}:J_{P^-}(\rho)\simeq \rho_M$ and the constant term map 
$D_P:\Wh(G,\psi)\rightarrow \Wh(M,\psi_M)$ which induces 
an $M$-module homomorphism $\overline{D}_{P}:J_{P}(\rho^\vee)\rightarrow \rho_M^\vee.$ 
Denote by \[\overline{BH}_{P^-}^\vee: \rho_M^\vee \simeq (J_{P^-}\rho)^\vee\] the $M$-module isomorphism 
dual to $\overline{BH}_{P^-}$, and by 
 \[\overline{BH}^P: \rho_M^\vee \simeq J_{P}(\rho^\vee)\] the isomorphism obtained by composing 
 $\overline{BH}_{P^-}^\vee$ with the isomorphism of the second adjunction theorem (Theorem \ref{theorem Bernstein form 1}). 
 Finally we set \[\overline{BH}_P:=(\overline{BH}^P)^{-1}: J_{P}(\rho^\vee)\simeq \rho_M^\vee \] and write \[BH_P:\d_P^{-1/2}\rho^\vee \twoheadrightarrow \rho_M^\vee\] the surjection obtained by inflating $\overline{BH}_P$.  
 We will now show the equality \[D_P=BH_P,\] which in particular implies that $D_P$ is surjective.

We recall that $(\rho^\vee,\Wh(G,\psi))$ is identified to the dual of $(\rho,\Wh_c(G,\psi^{-1}))$ thanks to the duality 
\[\langle W', W \rangle_G= \int_{U_0\backslash G} W'(g)W(g)dg\] for $(W',W)\in \Wh_c(G,\psi^{-1})\times \Wh(G,\psi)$ and that 
 $(\rho_M^\vee,\Wh(M,\psi_M))$ is identified to the dual of $(\rho_M,\Wh_c(M,\psi_M^{-1}))$ thanks to the duality 
\[\langle W_M', W_M \rangle_M= \int_{M\cap U_0\backslash M} W_M'(m)W_M(m)dm\] for $(W_M',W_M)\in \Wh_c(M,\psi_M^{-1})\times \Wh(M,\psi_M)$. In particular through our various identifications we have for any  $(W',W)\in \Wh_c(G,\psi^{-1})\times \Wh(G,\psi)$ with respective image $(\overline{W'},\overline{W})\in J_{P^-}(\Wh_c(G,\psi^{-1})\times J_P(\Wh(G,\psi^{-1}))$:
\begin{equation}\label{equation step 1} \langle \overline{W'}, \overline{W} \rangle_{P^-}= 
\langle \overline{BH}_{P^-}(\overline{W'}), \overline{BH}_P(\overline{W})\rangle_M=
\langle BH_{P^-}(W'), BH_P (W) \rangle_M.\end{equation} On the other hand 
Corollary \ref{corollary Bernstein form 2} tells us that given $K\in \OI(G)$, for $K$-invariant $W'$ and $W$ one has  
\begin{equation}\label{equation step 2} \langle \overline{W'}, J_P(\rho^\vee(a))\overline{W} \rangle_{P^-}=\d_P(a)^{-1/2}\langle W', \rho^\vee(a)W \rangle_G \end{equation} for 
all $a\in A_M^-(P,\e_K)$. So take $K\in \OI(G)$ small enough such that $K_{U_0}\subset \Ker(\psi)$, take $W\in \Wh(G,\psi)^K$, and define $W_0 \in \Wh_c(G,\psi^{-1})^K$ to be function supported on $U_0K=U_0K_M K_{U^-}$ equal to $1$ on $K$. Putting Equations (\ref{equation step 1}) and (\ref{equation step 2}) together we obtain for $a\in A_M^-(P,\e_K)$
\[\d_P^{1/2}(a) \langle BH_{P^-}(W_0), \rho_M^\vee(a) BH_P (W)\rangle_M= \langle BH_{P^-}(W_0), BH_P (\rho^\vee(a)W)\rangle_M\]
\[= \langle W_0, \rho^\vee(a)W \rangle_G =\int_{U_0 \backslash G} W_0(g)W(ga) dg\]
\[= \int_{U_0\cap M \backslash M}\int_{U^-} W_0(mu^-)W(m u^{-} a)\d_P^{-1}(m) d m d {u^-} \]
\[= \int_{U_0\cap M \backslash M}\int_{U^-} W_0(mu^-)W(m a (a^{-1} u^{-} a))\d_P^{-1}(m) d m d {u^-}\]
\[=\vol(K_{U^-})\int_{U_0\cap M \backslash M}W_0(m)W(m a )\d_P^{-1}(m) d m\]
\[=\vol(K_{U^-})\int_{M\cap U_0^-}\int_{A_0}W_0(a_0v_0^-)W(a_0 v_0^- a )\d_{M\cap P_0}^{-1}(a_0)\d_P^{-1}(a_0) da_0d{v_0^-}\]
\[=\vol(K_{U^-})\int_{M\cap U_0^-}\int_{A_0}W_0(a_0 v_0^-)W(a_0 a (a^{-1}v_0^- a))\d_{M\cap P_0}^{-1}(a_0)\d_P^{-1}(a_0) da_0d{v_0^-}\]
\[=\vol(K_{U^-})\vol(K_M \cap U_0^-)\int_{A_0}W_0(a_0)W(a_0 a)\d_{M\cap P_0}^{-1}(a_0)\d_P^{-1}(a_0) da_0 \]
\[=\vol(K_{U^-})\vol(K_M \cap U_0^-)\int_{K\cap A_0}W(a a_0)\d_{M\cap P_0}^{-1}(a_0)\d_P^{-1}(a_0) da_0 \]
\[=\vol(K_{U^-})\vol(K_M \cap U_0^-)\vol(K\cap A_0)W(a).\]

On the other hand 
\[ \d_P^{1/2}(a) \langle BH_{P^-}(W_0), \rho_M^\vee(a) BH_P (W)\rangle_M\]
\[=\d_P^{1/2}(a) \vol(K_{U^-})\int_{U_0\cap M \backslash M}W_0(m)BH_P(W)(m a )\d_P^{-1/2}(m)d m\]
\[=\d_P^{1/2}(a) \vol(K_{U^-})\vol(K_M \cap U_0^-)\vol(K\cap A_0)BH_P(W)(a)\] because $BH_P(W)$ is right $K\cap M$-invariant as $W$ is 
right $K$-invariant. From the above discussion we obtain the equality 
\[W(a)=\d_P^{1/2}(a) BH_P(W)(a)\] for all $W\in \Wh(G,\psi)^K$ and $a\in A_M^-(P,\e_K)$. As moreover both $BH_P$ and $D_P$ have the same equivariance property with respect to $P$ by definition of $BH_P$, we arrive at the following conclusion thanks to Proposition \ref{proposition characterization of the constant term}:

\begin{thm}\label{theorem BH=D}
The maps $BH_P$ and $D_P$ from $\Wh(G,\psi)$ to $\Wh(M,\psi_M)$ are equal and in particular $D_P$ is surjective.
\end{thm}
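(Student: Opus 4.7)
The plan is to verify that $BH_P$ satisfies the characterization of the constant term map given by Proposition \ref{proposition characterization of the constant term}. This reduces the problem to two tasks: checking that $BH_P$ is a $P$-module homomorphism from $(\d_P^{-1/2}\rho_G^\vee,\Wh(G,\psi))$ to $\Wh(M,\psi_M)$ with the appropriate trivial/$\rho_M^\vee$ action on the target, and checking the matching identity $\d_P^{1/2}(a) BH_P(W)(a) = W(a)$ on $A_M^-(P,\e_K)$ for $W \in \Wh(G,\psi)^K$ and all $K$ in some basis of neighborhoods of the identity. The equivariance property is essentially built into the construction of $BH_P$: since $\overline{BH}_{P^-}$ is an $M$-isomorphism, so is its dual $\overline{BH}_{P^-}^\vee$, and Bernstein's second adjunction (Theorem \ref{theorem Bernstein form 1}) packages this into an $M$-isomorphism $\overline{BH}^P : \rho_M^\vee \simeq J_P(\rho^\vee)$; inverting and inflating to $\d_P^{-1/2} \rho^\vee$ yields the desired $P$-equivariance.

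For the matching identity, I would test against a carefully chosen vector in $\Wh_c(G, \psi^{-1})$. Given $K \in \OI(G)$ small enough that $K_{U_0} \subset \Ker(\psi)$, define $W_0 \in \Wh_c(G,\psi^{-1})^K$ to be the function supported on $U_0 K = U_0 K_M K_{U^-}$ equal to $1$ on $K$, so that $BH_{P^-}(W_0)$ is by Theorem \ref{theorem BH isomorphism} a known explicit multiple of the restriction to $M$. Combining the compatibility of the dualities under our identifications (giving $\langle \overline{W_0}, J_P(\rho^\vee(a))\overline{W} \rangle_{P^-} = \langle BH_{P^-}(W_0), \rho_M^\vee(a) BH_P(W) \rangle_M$) with Corollary \ref{corollary Bernstein form 2} (giving $\langle \overline{W_0}, J_P(\rho^\vee(a))\overline{W} \rangle_{P^-} = \d_P(a)^{-1/2} \langle W_0, \rho^\vee(a) W \rangle_G$ for $a \in A_M^-(P,\e_K)$), I would reduce the claim to evaluating two explicit integrals.

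The right-hand side $\langle W_0, \rho^\vee(a) W \rangle_G$ is computed using the integration formula $\int_{U_0 \backslash G} = \int_{M\cap U_0 \backslash M}\int_{U^-} \d_P^{-1}(m)$, the support condition on $W_0$, and the decomposition $M = (M\cap U_0^-) A_0 (K \cap M)$ via the Iwasawa decomposition for $M$. The left-hand side is computed directly from the explicit formula for $BH_{P^-}(W_0)$. Both collapse to the same constant $\vol(K_{U^-}) \vol(K_M \cap U_0^-) \vol(K \cap A_0)$ times $W(a)$ (resp.\ $\d_P^{1/2}(a) BH_P(W)(a)$), using the right $K$-invariance of $W$ and the right $K \cap M$-invariance of $BH_P(W)$.

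The main obstacle, and the point where the choice of test vector does the key work, is handling the $U^-$-integral: one needs to change variables $u^- \mapsto a^{-1} u^- a$ (whose Jacobian is absorbed by a power of $\d_P$), and then exploit $a^{-1} K_{U^-} a \subset K_{U^-}$ for $a \in A_M^-(P,1)$ together with the support $\supp(W_0) \cap U^- \subset K_{U^-}$ to collapse the integral to a volume factor. Once this computation produces the required identity on $A_M^-(P,\e_K)$, Proposition \ref{proposition characterization of the constant term} forces $BH_P = D_P$. Surjectivity of $D_P$ is then immediate from the surjectivity of $BH_{P^-}$ in Theorem \ref{theorem BH isomorphism} and the fact that $BH_P$ is the inflation of an isomorphism $\overline{BH}_P : J_P(\rho^\vee) \simeq \rho_M^\vee$.
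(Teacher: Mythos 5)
Your proposal is correct and follows essentially the same route as the paper: verifying the characterization of $D_P$ from Proposition \ref{proposition characterization of the constant term} by pairing against the same test vector $W_0$ supported on $U_0K$, combining the duality compatibility with Corollary \ref{corollary Bernstein form 2}, and collapsing the $U^-$-integral via $a^{-1}K_{U^-}a\subset K_{U^-}$ (the paper simply rewrites $W(mu^-a)=W(ma(a^{-1}u^-a))$ and uses right $K$-invariance, so no Jacobian actually enters). The conclusion via the equivariance of $BH_P$ and the surjectivity statement are likewise identical to the paper's argument.
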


\section{Further notations and preliminaries}

Let $P$ be a standard parabolic subgroup of $G$. For $\alpha \in \D_0$, we set $F_\alpha=F-\{0\}$ if 
$\alpha \in \D_0^P$ and $F_\alpha=F$ otherwise.
We set \[\M_P^G=\prod_{\alpha\in \D_0} F_\alpha\] and recall that \[\mathcal{C}_c^\infty(\M_P^G)\simeq \bigotimes_{\alpha\in \D_0} \mathcal{C}_c^\infty(F_\alpha).\]
We denote by $\r_G:A_0\rightarrow \prod_{\alpha\in \D_0} F-\{0\} \subset \M_P$ the map defined by $\r_G(t)_\alpha=\alpha(t)$. 

Because the restriction map from the lattice $X^*(M)$ of algebraic characters of $M$ to that $X^*(A_M)$ of algebraic characters of $A_M$ is injective with finite co-kernel, any $|\chi|$ for $\chi\in X^*(A_M)$ extends uniquely to 
a positive character of $M$ still denoted by $|\chi|$. 
For $\e>0$ we set 
\[M^-(P,\e)=\{m\in M,\ |\alpha|(m)\leq \e,\ \alpha\in \D(A_M,P)\}.\]

Whenever $V$ is a $\C$-vector space affording a smooth representation $\pi$ of $A_M$, we denote by 
$V_{A_M,\mathrm{fin}}$ its subspace of $A_M$-finite vectors, i.e. the space of vectors $v$ in $V$ such that 
$\C[A_M].v$ is finite dimensional. For example when $V=\mathcal{C}^\infty(A_M)$ with $\pi:=\rho$ the action by translation, 
we set \[\F(A_M):=\mathcal{C}^\infty(A_M)_{A_M,\mathrm{fin}}\]
Note that 
\[V_{A_M,\mathrm{fin}}=\bigoplus_{\chi\in \widehat{A_M}} V_{(\chi)}\] where 
\[V_{(\chi)}=\{v\in V, \ \exists n\in \N-\{0\}, (\pi(a)-\chi(a)\Id)^n v=0\ \forall a\in A_M\}.\]
The space $V_{(\chi)}$ is an $A_M$-submodule of $V_{A_M,\mathrm{fin}}$ which we call the characteristic subspace associated to $\chi$, and which contains the eigenspace 
\[V_{\chi}=\{v\in V, \ (\pi(a)-\chi(a)\Id) v=0\ \forall a\in A_M\}.\] We denote by $\widehat{A_M}$ the group of smooth characters from $A_M$ to $\C^*$ and set 
\[\E(A_M,V)=\{\chi\in \widehat{A_M}, \ V_{(\chi)}\neq \{0\}\}=\{\chi\in \widehat{A_M}, \ V_{\chi}\neq \{0\}\}.\]
When $V$ is an admissible representation of $M$ then $V=V_{A_M,\mathrm{fin}}$. 
We recall from \cite[Section 4]{Casselman} that if $V$ is an admissible representation of $G$, then $J_P(V)$ is an admissible representation of $M$ for $P=MU$ any standard parabolic subgroup of $G$. 

\section{The germ map}\label{section germ}

Here we recall ideas of Casselman and Shalika (\cite{CS}) further developed by Lapid and Mao in \cite{LM}. 
One says that two functions $f$ and $f'$ in $\mathcal{C}^\infty(A_M)$ (resp. $\mathcal{C}^\infty(M)$)  have the same germ if there is $\e>0$ such that 
$f_{|A_M^-(P,\e)}={f'}_{|A_M^-(P,\e)}$ (resp. $f_{|M^-(P,\e)}={f'}_{|M^-(P,\e)}$) and we write $f\sim f'$. This defines an equivalence relation on $\mathcal{C}^\infty(A_M)$ (resp. $\mathcal{C}^\infty(M)$) and we denote by $[f]$ the class of $f$ and call it its germ. We set $\G(A_M)=[\mathcal{C}^\infty(A_M)]$ (resp. $\G(M)=[\mathcal{C}^\infty(M)]$). The map 
$f\mapsto [f]$ from $\mathcal{C}^\infty(A_M)$ to $\G(A_M)$ (resp. from $\mathcal{C}^\infty(M)$ to $\G(M)$) is a smooth $A_M$-bimodules (resp. $M$-bimodules) homomorphism. We note that the paper \cite{LM} is valid for $F$ of positive characteristic as well (the characteristic zero assumption plays no role in the paper). By \cite[Lemma 2.9]{LM}, the map $f\mapsto [f]$ induces an isomorphism $\Gamma_M$ between $\mathcal{C}^\infty(A_M)_{A_M,\mathrm{fin}}$ and $\G(A_M)_{A_M,\mathrm{fin}}$. From this one deduces as in the 
proof of \cite[Corollary 2.11]{LM} that $f\mapsto [f]$ induces an isomorphism 
\[\i_M:\mathcal{C}^\infty(M)_{A_M,\mathrm{fin}}\simeq \G(M)_{A_M,\mathrm{fin}}.\]

It is then proved in the proof of \cite[Theorem 3.1]{LM} that if $\pi\subset \Wh(G,\psi)$ is an admissible $G$-module (Lapid and Mao work under a finite length assumption which is unnecessary), then the map 
\[W\mapsto [\d_P^{-1/2}W_{|M}]\] factors through a map \[\k_P: J_P(\pi)\rightarrow \G(M).\] Moreover 
as \[J_P(\pi)=J_P(\pi)_{A_M,\mathrm{fin}}\] because $J_P(\pi)$ is admissible, we deduce that 
\[\k_P: J_P(\pi)\rightarrow \G(M)_{A_M,\mathrm{fin}}.\] We then define 
\[\Xi_P:=\i_M^{-1}\circ \k_P: J_P(\pi)\rightarrow \mathcal{C}^\infty(M)_{A_M,\mathrm{fin}}.\] The authors of 
\cite{LM} note that in fact \[\Xi_P: J_P(\pi)\rightarrow \Wh(M,\psi_M)_{A_M,\mathrm{fin}}.\] Here we answer 
\cite[Question 3.2]{LM}.

\begin{thm}\label{theorem Xi vs D}
Let $\pi\subset \Wh(G,\psi)$ an admissible $G$-module, then the "germ map" $\Xi_P$ coincides with 
$\overline{D_P}=\overline{BH}_P:J_P(W)\mapsto W_P$ on $\pi$. In particular it is injective hence $\Ker(\k_P)=\{0\}$.
\end{thm}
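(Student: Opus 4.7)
My plan is to apply the uniqueness part of Corollary \ref{corollary admissible subs of W} to the inflation of $\Xi_P$ back to $\pi$. Let $q:\pi\to J_P(\pi)$ denote the projection and set $\widetilde{\Xi}_P := \Xi_P\circ q : \pi \to \Wh(M,\psi_M)$. If I can check that $\widetilde{\Xi}_P$ is a $P$-module homomorphism from $(\d_P^{-1/2}\rho_G^\vee,\pi)$ to $\Wh(M,\psi_M)$ (with $U$ acting trivially and $M$ via $\rho_M^\vee$) satisfying the asymptotic matching condition of that corollary, then $\widetilde{\Xi}_P = D_P|_\pi$ and thus $\Xi_P = \overline{D_P}|_{J_P(\pi)}$, whence by Theorem \ref{theorem BH=D} also $\Xi_P = \overline{BH}_P|_{J_P(\pi)}$.

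The $U$-invariance of $\widetilde{\Xi}_P$ is automatic from the factorization through $q$. For the $M$-equivariance, the map $f\mapsto [f]$ from $\mathcal{C}^\infty(M)$ to $\G(M)$ is an $M$-bi-equivariant map, the isomorphism $\i_M$ is $M$-equivariant on $A_M$-finite vectors, and the $\d_P^{-1/2}$ twist built into $\k_P$ (via $W \mapsto [\d_P^{-1/2}W_{|M}]$) matches exactly the normalization separating $r_P$ from $J_P$, so that $\widetilde{\Xi}_P$ intertwines $(\d_P^{-1/2}\rho_G^\vee)$ on $\pi$ with $\rho_M^\vee$ on $\Wh(M,\psi_M)$. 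To verify the asymptotic matching, I unwind the definition $\Xi_P = \i_M^{-1}\circ \k_P$: for every $W\in\pi$, the function $\widetilde{\Xi}_P(W)$ has by construction the same germ on $M$ as $\d_P^{-1/2}W_{|M}$, so there exists $\e>0$ such that
\[\widetilde{\Xi}_P(W)(m) \;=\; \d_P^{-1/2}(m)W(m) \quad \text{for all } m\in M^-(P,\e).\]
Since $A_M^-(P,\e)\subset M^-(P,\e)$, specializing to $a\in A_M^-(P,\e)$ yields $\d_P^{1/2}(a)\widetilde{\Xi}_P(W)(a)=W(a)$, which is precisely the asymptotic condition characterizing $D_P$ in Corollary \ref{corollary admissible subs of W}.

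These two properties together force $\widetilde{\Xi}_P = D_P|_\pi$, and therefore $\Xi_P = \overline{D_P}|_{J_P(\pi)}$ after descending along $q$. For the injectivity: Theorem \ref{theorem BH=D} identifies $\overline{D_P}$ with $\overline{BH}_P$, which is an $M$-module isomorphism on the ambient space $J_P(\Wh(G,\psi))$; since the Jacquet functor is exact the inclusion $J_P(\pi)\hookrightarrow J_P(\Wh(G,\psi))$ is injective, so $\Xi_P$ is the restriction of an isomorphism to a subspace and is in particular injective. As $\i_M$ is already an isomorphism, it follows that $\Ker(\k_P)=\{0\}$, answering \cite[Question 3.2]{LM}. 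No step is genuinely hard here; the most delicate point is keeping the modulus-character bookkeeping straight in the $M$-equivariance of $\widetilde{\Xi}_P$, which is really the content of the implicit computation already made by Lapid and Mao when they showed $\k_P$ descends through the Jacquet quotient.
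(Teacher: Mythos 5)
Your proof is correct and follows essentially the same route as the paper: apply the characterization of $D_P$ on admissible submodules (Corollary \ref{corollary admissible subs of W}) and observe that, by the very definition of $\Xi_P$ via germs, $\Xi_P(J_P(W))$ agrees with $\d_P^{-1/2}W$ on $A_M^-(P,\e)\subset M^-(P,\e)$ for some $\e>0$. You merely spell out the $P$-equivariance bookkeeping and the injectivity argument (via $\overline{BH}_P$ being an isomorphism and exactness of $J_P$) that the paper leaves implicit.
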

\begin{proof}
Thanks to the characterization of $D_P$ given in Corollary \ref{corollary admissible subs of W}, it is sufficient to show that there exists $\e>0$ such that such that 
$\Xi_P(W)=\d_P^{-1/2}(a)W(a)$ on $A^-(P,\e)$. However by definition $\Xi_P(W)$ and $\d_P^{-1/2}W_{|M}$ have the same germ, hence 
in particular they agree on $A_M^-(P,\e)\subset M^-(P,\e)$ for some $\e>0$.
\end{proof}

\section{The asymptotic expansion of Whittaker functions}

The following expansion is due to Lapid and Mao (\cite[Theorem 3.1]{LM}), we reproduce parts of their proof which holds verbatim in our more general context, as we shall need to use them later in the context of integral $\ell$-adic representations. 

\begin{thm}\label{theorem asymptotic expansion}
Let $(\pi,V)$ be an admissible $\C[G]$-submodule of $\Wh(G,\psi)$ and $W\in V$, then for each $P=MU \supseteq P_0$ and $\chi\in \E(A_M,J_P(\pi))$, there is a finite indexing set $I_{W,\chi}$ (where we choose $I_{W,\chi}$ disjoint from 
$I_{W,\chi'}$ if $\chi\neq \chi'$) such that one can write for $t\in A_0$ 
\[W(t)=\sum_{P=MU \supseteq P_0} \d_P(t)^{1/2} \sum_{\chi\in \E(A_M,J_P(\pi))}\sum_{i\in I_{W,\chi}} f_i(t)\phi_i(\r_G(t))\]
for some functions $f_i\in \F(A_0)_{(\chi)}$ and $\phi_i\in \S(\M_P^G)$.
\end{thm}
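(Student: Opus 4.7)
The plan is to transcribe the argument of \cite[Theorem 3.1]{LM} into our setting, using the structural results already assembled above. For each standard parabolic $P = MU$ of $G$, Corollary \ref{corollary admissible subs of W} provides $\e_P > 0$ (depending on $W$ and $P$) such that $W(a) = \d_P^{1/2}(a) W_P(a)$ on $A_M^-(P, \e_P)$. Admissibility of $J_P(\pi)$, recalled from \cite[Section 4]{Casselman}, makes $W_P$ an $A_M$-finite vector, so it decomposes as $W_P = \sum_{\chi \in \E(A_M, J_P(\pi))} W_{P,\chi}$ with $W_{P,\chi} \in J_P(\pi)_{(\chi)}$. Letting $\e$ be the minimum of the $\e_P$ over the (finitely many) standard parabolics and using the stratification $A_0 \cap A_0^-(P_0, 1) = \bigsqcup_{P \supseteq P_0} A_{0,P}^-(\e)$, we obtain, for $t$ in the stratum $A_{0,P}^-(\e) \subseteq A_M^-(P, \e)$, the identity
\[W(t) = \d_P^{1/2}(t) \sum_{\chi \in \E(A_M, J_P(\pi))} W_{P,\chi}(t).\]

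For each $(P, \chi)$, the restriction $W_{P,\chi}|_{A_0}$ is smooth on $A_0$ and $\chi$-generalized under the right action of $A_M \subseteq A_0$, so it lies in $\F(A_0)_{(\chi)}$. To implement the cutoff to the stratum, I take $\phi_P \in \S(\M_P^G) = \mathcal{C}_c^\infty(\M_P^G)$ to be the characteristic function of the compact open subset
\[\{(x_\alpha)_{\alpha \in \D_0} \in \M_P^G : |x_\alpha| \leq \e \text{ if } \alpha \notin \D_0^P,\ \e < |x_\alpha| \leq 1 \text{ if } \alpha \in \D_0^P\},\]
so that $\phi_P(\r_G(t))$ is the indicator of $A_{0,P}^-(\e)$ on $A_0 \cap A_0^-(P_0, 1)$. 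Taking $|I_{W,\chi}| = 1$ with $f = W_{P,\chi}|_{A_0}$ and $\phi = \phi_P$ gives the required presentation on each stratum, and summing over $P$ and $\chi$ recovers $W(t)$ on $A_0 \cap A_0^-(P_0, 1)$.

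To handle $t \in A_0 \setminus A_0^-(P_0, 1)$, the Whittaker equivariance $W(ut) = \psi(u) W(t)$ for $u \in U_0$ combined with right $K$-invariance of $W$ (say $W \in V^K$ for some $K \in \OI(G)$) forces $W(t)$ to vanish whenever $|\alpha(t)|$ exceeds a constant $C_K$ for some $\alpha \in \D_0$, via the standard argument conjugating a small element of the $\alpha$-root subgroup of $U_0$ by $t$ and invoking non-triviality of $\psi$ on that root subgroup. Thus $W|_{A_0}$ is supported in $A_0^-(P_0, C_K)$, itself covered by finitely many $A_0^1$-translates of the strata $A_{0,P}^-(\e)$, and the extra contributions are absorbed by enlarging the compact supports of the $\phi_P$ accordingly. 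The main obstacle is precisely this last bookkeeping: choosing the compactly supported cutoffs $\phi_i$ coherently across the finitely many (translated) strata and verifying that the resulting finite sum reconstructs $W$ globally on $A_0$. This reduces to a finite combinatorial exercise, finite thanks to admissibility of $J_P(\pi)$ (only finitely many $\chi$ contribute to any single $W_P$) and the compact support of $W|_{A_0}$ modulo $A_0^1$ just noted.
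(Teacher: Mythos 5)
Your stratification strategy is a genuinely different route from the paper's (which follows Lapid--Mao: induction on the split rank of $G$, expanding the constant terms $W_{P_J}$ on the Levi subgroups and recombining them by an inclusion--exclusion over subsets $J\subseteq \D_0$ with cutoffs $\prod_{\alpha\in J}\mathbf{1}_{]0,\e[}(|\alpha(\cdot)|)$), but it has a genuine gap at its central step: the claimed inclusion $A_{0,P}^-(\e)\subseteq A_M^-(P,\e)$ is false. Elements of the stratum $A_{0,P}^-(\e)$ lie in $A_0$ and, for $P_0\subsetneq P\subsetneq G$, are in general not in $A_M$ at all (the coordinates in the $\D_0^P$-directions are merely bounded, not trivial). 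Consequently Corollary \ref{corollary admissible subs of W}, which gives $W(a)=\d_P^{1/2}(a)W_P(a)$ only for $a\in A_M^-(P,\e)\subseteq A_M$, does not yield the identity $W(t)=\d_P^{1/2}(t)W_P(t)$ on your strata, and that identity is exactly what your argument rests on. What is needed is the uniform statement: $W(t)=\d_P^{1/2}(t)W_P(t)$ for all $t\in A_0$ with $|\alpha(t)|\le\e$ for $\alpha\in\D_0-\D_0^P$ and $|\alpha(t)|$ bounded for $\alpha\in\D_0^P$. This can be proved by writing such $t$ as $\omega a$ with $a\in A_M^-(P,\e')$ and $\omega$ in a compact subset of $A_0$, observing that the translates $\rho^\vee(\omega)W$ are fixed by a common $K'\in\OI(G)$ and form a finite set, and invoking the $K$-uniform characterization of Proposition \ref{proposition characterization of the constant term} (not merely its admissible, vector-by-vector variant) --- but this uniformity in the bounded directions is precisely the technical heart of the Casselman--Shalika/Lapid--Mao argument, which the paper imports by quoting the proof of \cite[Theorem 3.1]{LM}. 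As written, your proof assumes the hard point rather than proving it.

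The second defect is the treatment of $t$ outside $A_0^-(P_0,1)$: covering by ``finitely many $A_0^1$-translates of the strata'' cannot work, since translation by the compact group $A_0^1$ preserves every $|\alpha(t)|$ and therefore never leaves $A_0^-(P_0,1)$. The repair is cheap once the previous point is settled: since $W|_{A_0}$ vanishes when $|\alpha(t)|>C_K$ for some $\alpha\in\D_0$ (your vanishing argument is correct), replace the condition $\e<|x_\alpha|\le 1$ by $\e<|x_\alpha|\le C_K$ in the definition of the strata and of the cutoffs $\phi_P$ --- these are still characteristic functions of compact open subsets of $\M_P^G$, since the relevant annuli avoid $0$ in the coordinates indexed by $\D_0^P$ --- and verify the uniform constant-term identity on these enlarged strata, which again requires the $K$-uniform Proposition \ref{proposition characterization of the constant term}. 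The remaining ingredients of your sketch are sound: the decomposition $W_P=\sum_\chi W_{P,\chi}$ with finitely many $\chi$ occurring follows from admissibility of $J_P(\pi)$, and $W_{P,\chi}|_{A_0}$ does lie in $\F(A_0)_{(\chi)}$. So the skeleton can be salvaged, but the two steps above, and above all the first, are exactly where the actual work lies, and the proposal in its current form does not carry it out.
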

\begin{proof}
We do an induction on the split rank $r$ of $G/A_G$. If $r=0$ then $\S(\M_G^G)$ is just the space of constant functions on 
$A_0=A_G$. Take $W\in V$, because $V$ is admissible $W$ is $A_G$-finite and the expansion follows. If $r>0$ we can always suppose that $W\in V_{(\mu)}$ for $\mu\in \E(A_G,\pi)$. For $\emptyset\neq J\subset \D_0$ we denote by $P_J=M_J U_J$ the standard parabolic subgroup of $G$ such that $\D_0-\D_0^{P_J}=J$. By induction 
we have for any subset $\emptyset\neq J\subset \D_0$: 
\[W_{P_J}(t)= \sum_{P\subset P_J} \d_{P\cap M_J}(t)^{1/2} \sum_{\chi\in \E(A_M,J_P(\pi))}F_{J,\chi}(t)\] where 
\[F_{J,\chi}(t)=\sum_{k\in I_{W,J,\chi}} f_k(t)\phi_k(\r_{M_J}(t))\] for some finite indexing set $I_{W,J,\chi}$ (with all such sets disjoint from one another when $J$ and $\chi$ vary) and $f_k\in \F(A_0)_{(\chi)}$, $\phi_k\in \S(\M_{P\cap M_J}^{M_J})$. Hence arguing as in the proof of \cite[Theorem 3.1]{LM} there is $\e>0$ such that for all subsets $\emptyset\neq J\subset \D_0$ one has 
\[W(t)=\sum_{P\subseteq P_J} \d_{P}(t)^{1/2} \sum_{\chi\in \E(A_M,J_P(\pi))}F_{J,\chi}(t)\] on 
$A_0^-(P_0,\epsilon)$. Setting 
\[F_\chi(t)= \sum_{\emptyset \neq J \subset \D_0-\D_0^P}(-1)^{|J|-1}F_{J,\chi}(t)\prod_{\alpha\in J} \mathbf{1}_{]0,\e[}(|\alpha(t)|)\] we observe that 
\[F_{J,\chi}(t)\prod_{\alpha\in J} \mathbf{1}_{]0,\e[}(|\alpha(t)|)
=\sum_{k\in I_{W,J,\chi}} f_k(t)\phi'_k(\r_G(t))\] 
where \[\phi'_k(\r_G(t))=\phi_k(\r_{M_J}(t))\prod_{\alpha\in J}\mathbf{1}_{]0,\e[}(|\alpha(t)|)\in \S(\M_{P}^{G}).\] Then Lapid and Mao show that 
\[W(t)=\sum_{P=MU\subsetneq G} \sum_{\chi\in \E(A_M,J_P(\pi))} \d_P^{1/2}(t)F_\chi(t)\] for any $t\in A_0$ such that there exists 
$\alpha\in \D_0$ with $|\alpha(t)|<\e$, whereas the sum on the right vanishes for $t$ not satisfying this property. But then 
we claim that the function \[\Phi(t)=W(t)\prod_{\alpha\in \D_0}\mathbf{1}_{[\e,+\infty[}(|\alpha(t)|)\] belongs to 
$\F(A_0)_{(\mu)}\S(\M_G^G)$ and the equality \[W(t)=\sum_{P=MU\subsetneq G} \sum_{\chi\in \E(A_M,J_P(\pi))} \d_P^{1/2}(t)F_\chi(t)+\Phi(t)\] will then end the proof of the theorem. Indeed using the equivariance property of $W$ under $U_0$ we have $W(t)=0$ whenever $|\alpha(t)|$ is large enough for some 
$\alpha\in \D_0$, hence there is $C$ such that \[\Phi(t)=W(t)\prod_{\alpha\in \D_0}\mathbf{1}_{[\e,C]}(|\alpha(t)|).\] Now 
$W$ is finite under $A_G$ because $V$ is admissible and the result follows. 
\end{proof}

\section{Applications}

In this section we generalize to admissible representations of $G$ 
on which $A_G$ acts by a character a theorem of Delorme (\cite[Proposition 13 and Théorème 9]{DelormePWi}) and Sakellaridis--Venkatesh (\cite[Corollary 6.3.5]{SV} for quasi-split groups) for irreducible representations, characterizing discrete series with a generalized Whittaker model. We also prove similar statements for cuspidal and tempered representations, generalizing \cite[Théorème 10]{DelormePWi} in the cuspidal case.

\begin{lemma}\label{lemma eigenvector in the Jacquet module}
Let $\pi\subset \Wh(G,\psi)$ be an admissible $G$-submodule. Let $P=MU$ be a standard parabolic subgroup of $G$ (possibly $G$ itself) such that $\E(A_M,J_P(\pi))\neq \emptyset$ and take $\chi\in \E(A_M,J_P(\pi))$. Then there is $W\in \pi$ and $\e>0$ such that 
$W(t)=\d_P^{1/2}(t)\chi(t)$ on $A_M^-(P,\e)$. 
\end{lemma}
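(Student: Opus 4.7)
The plan is to exploit the identification $\overline{D}_P = \overline{BH}_P \colon J_P(\Wh(G,\psi)) \xrightarrow{\sim} \Wh(M,\psi_M)$ from Theorem \ref{theorem BH=D} together with the characterization of the constant term on admissible submodules given by Corollary \ref{corollary admissible subs of W}.

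First, by definition of $\E(A_M, J_P(\pi))$, the hypothesis furnishes a nonzero $\bar v$ in the true eigenspace $J_P(\pi)_\chi$. Pick any preimage $W_0 \in \pi$ of $\bar v$ and set $W_P^0 := D_P(W_0) = \overline{D}_P(\bar v) \in \Wh(M,\psi_M)$. Since $J_P$ is exact, $J_P(\pi)$ embeds in $J_P(\Wh(G,\psi))$, on which $\overline{D}_P$ is an isomorphism by Theorem \ref{theorem BH=D}; hence $W_P^0 \neq 0$. Moreover, as $\overline{D}_P$ intertwines the normalized Jacquet action on the source with $\rho_M^\vee$ on the target and $\bar v$ is a $\chi$-eigenvector for $A_M$, the function $W_P^0$ satisfies $W_P^0(ma) = \chi(a)\,W_P^0(m)$ for all $m \in M$ and $a \in A_M$.

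Next, since the $M$-submodule of $\Wh(M,\psi_M)$ generated by $W_P^0$ is nonzero, the functional $\d_e$ (whose non-triviality on every nonzero Whittaker submodule was recorded at the end of the smooth representations subsection) is non-vanishing on it, so there exists $m_0 \in M$ with $W_P^0(m_0) \neq 0$. Define
\[W \;:=\; \d_P^{-1/2}(m_0)\,W_P^0(m_0)^{-1}\,\rho^\vee(m_0)\,W_0 \;\in\; \pi.\]
Using the $P$-intertwining relation $D_P(\rho^\vee(m_0) W_0) = \d_P^{1/2}(m_0)\,\rho_M^\vee(m_0)\,W_P^0$, together with the centrality of $A_M$ in $M$ (so that $am_0 = m_0 a$) and the eigenvector property of $W_P^0$, one checks directly that $D_P(W)(a) = \chi(a)$ for every $a \in A_M$.

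Finally, Corollary \ref{corollary admissible subs of W} applied to this particular $W \in \pi$ yields some $\e > 0$ such that $W(a) = \d_P^{1/2}(a)\,D_P(W)(a) = \d_P^{1/2}(a)\,\chi(a)$ for all $a \in A_M^-(P, \e)$, which is the desired conclusion. The only mild subtlety is the translation step: an arbitrary preimage $W_0$ of $\bar v$ may well have $W_P^0(e) = 0$, but the centrality of $A_M$ combined with the non-triviality of $\d_e$ on nonzero $M$-submodules of $\Wh(M,\psi_M)$ allows us to replace $W_0$ by a suitable translate so that the normalisation defining $W$ makes sense.
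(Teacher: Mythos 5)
Your proof is correct and follows essentially the same route as the paper, whose entire argument is ``by Theorem \ref{theorem BH=D} take $W\in\pi$ with $W_P=\chi$, then conclude by Theorem \ref{theorem Xi vs D}'': you use the isomorphism $\overline{D}_P=\overline{BH}_P$ to produce an element of $\pi$ whose constant term restricts to $\chi$ on $A_M$, and then the germ identity $W(a)=\d_P^{1/2}(a)W_P(a)$ on a cone $A_M^-(P,\e)$. The only difference is that you spell out the normalisation step (translating by $m_0$ with $W_P^0(m_0)\neq 0$, using centrality of $A_M$) which the paper leaves implicit; this is a welcome clarification rather than a departure.
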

\begin{proof}
 By Theorem \ref{theorem BH=D} and the discussion before it the map $W\mapsto W_P$ on $\Wh(G,\psi)$ identifies with the normalized Jacquet projection $J_P$, hence the same holds on any $G$-submodule of $\Wh(G,\psi)$, for example $\pi$. In particular the space $W_P$ for $W\in \pi$ is the normalized Jacquet module of $\pi$ with respect to $P$. Now take $W\in \pi$ such that $W_P\neq 0$ in the $\chi$-eigenspace of $J_P(\pi)$. Up to translating $W$ by an element of $M$, and normalizing it by a nonzero scalar in $\C^\times$, we can suppose that 
$W_P(1)=1$, which in turn implies that $W_P(t)=(\rho(t)W_P)(1)=\chi(t)$ for all $t\in A_M$. On the other hand the asymptotic expansion of Theorem \ref{theorem asymptotic expansion} gives the existence of $\e>0$ and distinct characters $\chi_1,\dots,\chi_r$ of $A_M$ such that one has on $A_M^-(P,\e)$ an expansion of the form $W(t)=\sum_{i=1}^r f_i(t)$ with $f_i\in \F(A_M)_{(\chi_i)}$. Hence according to Corollary 3.9 we have $W_P(t)=\sum_{i=1}^r \d_P(t)^{-1/2}f_i(t)$ on $A_M^-(P,\e)$ up to taking $\e$ even smaller. We recall that as already noticed at the beginning of Section \ref{section germ}, there is a unique $f\in \mathcal{C}^\infty(A_M)_{A_M,\mathrm{fin}}$ such that $[f]=[(W_P)_{|A_M}]$, and this $f$ has to be at the same time $\sum_{i=1}^r \d_P^{-1/2}f_i$ and $\chi$, so that only one $f_i$ (say $f_1$) is nonzero and $\d_P^{-1/2}f_1=\chi$. This last equality gives the seeked equality  $W(t)=\d_P(t)^{1/2}\chi(t)$ on $A_M^-(P,\e)$, where $W\in \pi$ by assumption.
\end{proof}

Let $(\pi,V)$ be an admissible representation of $G$. We recall that $\pi$ is called cuspidal if all its Jacquet modules associated to proper standard parabolic subgroups are zero. If $A_G$ acts on $V$ by a unitary character, then we say that $\pi$ is a discrete series if the module of every matrix coefficient of $\pi$ belongs to $L^2(A_G\backslash G)$. Thanks to Casselman, this is known to be equivalent to 
the fact that for every proper standard parabolic subgroup $P=MU$ of $G$, every $\chi\in \E(A_M,J_P(V))$ satisfies 
$|\chi(t)|<1$ for $t\in A_M^-(P,1)\cap (A_0^1A_G)^c$, we say that $\chi$ is \textit{positive} (\cite[Theorem 4.4.6]{Casselman}). Similarly still under the hypothesis that $A_G$ acts acts on $V$ by a unitary character, then we say that $\pi$ is tempered if the module of every matrix coefficient of $\pi$ belongs to $L^{2+r}(A_G\backslash G)$ for any $r>0$. One shows as in 
\cite[Theorem 4.4.6]{Casselman} that it is  equivalent to the fact that for every proper standard parabolic subgroup $P=MU$ of $G$, every $\chi\in \E(A_M,J_P(V))$ satisfies 
$|\chi(t)|\leq 1$ for $t\in A_M^-(P,1)$, we say that $\chi$ is \textit{non negative}. In particular discrete series are tempered. If $c$ is a character of $A_G$, we denote by $\mathcal{C}^\infty(A_G U_0\backslash G, c\otimes \psi)$ the set of smooth functions $W$ from $G$ to $\C$ which satisfy $W(au_0g)=c(a)\psi(u_0)W(g)$ for $a\in A_G,\ u_0\in U_0, \ g\in G$. We denote by  $\mathcal{C}_c^\infty(A_G U_0\backslash G, c\otimes \psi)$ its $G$-submodule consisting of functions with support compact modulo $A_GU_0$. If $c$ is moreover unitary, for $p>0$ we denote by $L^p(A_G U_0\backslash G, c\otimes \psi)^\infty$ the $G$-submodule of $\mathcal{C}^\infty(A_G U_0\backslash G, c\otimes \psi)$ consisting of functions $W$ such that $|W|^p$ is integrable on $A_G U_0\backslash G$.

\begin{thm}\label{theorem quick DSV}
Let $\pi$ be an admissible submodule of $\Wh(G,\psi)$ such that $A_G$ acts on $\pi$ by a unitary character $c$, then $\pi$ is cuspidal, resp. square integrable, resp. tempered if and only if $\pi\subset \mathcal{C}_c^\infty(A_G U_0\backslash G, c\otimes \psi)$, resp. $\pi\subset L^2(A_G U_0\backslash G, c\otimes \psi)^\infty$, resp. $\pi \subset L^{2+r}(A_G U_0\backslash G, c\otimes \psi)^\infty$ for all $r>0$. In the cuspidal case the unitary assumption on $c$ is superfluous.
\end{thm}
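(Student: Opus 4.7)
The plan is to prove each of the three equivalences by reducing the function-space condition on $G$ to a condition on the restriction of $W$ to $A_0$, then combining Theorem \ref{theorem asymptotic expansion} (for the direction "representation-theoretic property $\Rightarrow$ function-space membership") with Lemma \ref{lemma eigenvector in the Jacquet module} (for the converse), together with Casselman's criterion \cite[Theorem 4.4.6]{Casselman} which translates discrete series and tempered-ness into sign conditions on the exponents of Jacquet modules as recalled in the statement.

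First I would reduce to $A_0$. Using $G = U_0 M_0 K_0$, the integration formula $\int_{U_0 \bs G} f = \int_{M_0}\int_{K_0} f(mk)\d_{P_0}(m)^{-1}\,dk\,dm$, compactness of $K_0$ and $M_0/A_0$, and the fact that any $W \in \pi$ is $K_0$-finite (by admissibility $\rho(K_0)W$ spans a finite-dimensional subspace), integrability of $|W|^p$ on $A_G U_0 \bs G$ is equivalent to integrability of $|W|^p \d_{P_0}^{-1}$ on $A_G \bs A_0$, and compact support modulo $A_G U_0$ is equivalent to compact support of $W|_{A_0}$ modulo $A_G$. Moreover, $\psi$-equivariance of $W$ forces $W(a)=0$ whenever some $|\alpha(a)|$ for $\alpha \in \D_0$ is sufficiently large, so only the behavior of $W$ on $A_0^-(P_0,\e)$ for small $\e>0$ matters, which is precisely where Theorem \ref{theorem asymptotic expansion} applies.

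For the direction $\Rightarrow$ I would plug the asymptotic expansion into this reduced condition. Each summand $\d_P^{1/2}(t) f_i(t)\phi_i(\r_G(t))$ has $\phi_i \in \S(\M_P^G)$ of compact support and $f_i$ a generalized $\chi$-eigenvector under $A_M$, hence $|f_i(t)| \leq C|\chi(t)|$ times a polynomial in the valuations of $\alpha(t)$ for $\alpha \in \D_0^P$. In the cuspidal case all Jacquet modules at proper parabolics vanish and only the $P=G$ summand survives; compact support of $\phi_i$ on $\prod_{\alpha \in \D_0}(F-\{0\})$, combined with $A_G \subset \ker \r_G$, forces $W|_{A_0}$ to be compactly supported modulo $A_G A_0^1$, hence $W$ compactly supported modulo $A_G U_0$; since no integration is used, the hypothesis that $c$ be unitary is unnecessary here. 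For the discrete series and tempered cases I would split $A_0^-(P_0,\e)$ along the decomposition indexed by the sets $A_{0,P}^-(\e)$ of the Notation section and use Casselman's inequalities $|\chi|<1$ on $A_M^-(P,1)\cap(A_0^1 A_G)^c$ (discrete series) respectively $|\chi|\leq 1$ on $A_M^-(P,1)$ (tempered), together with the Jacobian $\d_{P_0}^{-1}$, to verify that each summand contributes finitely to the $L^2$ norm, respectively to the $L^{2+r}$ norm for every $r>0$.

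For the converse $\Leftarrow$ I would argue by contraposition. If $\pi$ fails to be cuspidal (resp.\ discrete series, resp.\ tempered), Casselman exhibits a proper standard parabolic $P=MU$ with $\E(A_M, J_P(\pi))\neq\emptyset$ (resp.\ an exponent $\chi \in \E(A_M, J_P(\pi))$ violating $|\chi|<1$ on $A_M^-(P,1)\cap(A_0^1 A_G)^c$, resp.\ violating $|\chi|\leq 1$ on $A_M^-(P,1)$). Lemma \ref{lemma eigenvector in the Jacquet module} then produces $W \in \pi$ and $\e > 0$ with $W(t)=\d_P^{1/2}(t)\chi(t)$ on $A_M^-(P,\e)$. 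In the cuspidal case, noncompactness of $A_M/A_G$ together with the nonvanishing of this expression on $A_M^-(P,\e)$ immediately contradicts compact support modulo $A_G U_0$. In the discrete series and tempered cases I would isolate the contribution of $A_M^-(P,\e)/A_G$ to $\int_{A_G \bs A_0} |W|^p \d_{P_0}^{-1}\, dt$ and show divergence when the Casselman bound on $|\chi|$ fails. The main technical point, which I expect to be the main obstacle, is checking that divergence of
\[
\int_{A_M^-(P,\e)/A_G} \d_P(t)^{p/2}|\chi(t)|^p \d_{P_0}(t)^{-1}\, dt
\]
is equivalent to failure of the corresponding Casselman inequality on $|\chi|$; this is a standard measure-theoretic computation on the free abelian group $A_M / A_M^1 A_G$ in coordinates along $\D(A_M,P)$, using that $\d_{P_0}$ coincides with $\d_P$ on $A_M$ as recorded in the Haar measures subsection.
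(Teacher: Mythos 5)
Your overall strategy coincides with the paper's: the forward implications via Theorem \ref{theorem asymptotic expansion}, the Iwasawa decomposition and compactness of $M_0/A_0$, and the converses via the function $W$ produced by Lemma \ref{lemma eigenvector in the Jacquet module} with $W(t)=\d_P^{1/2}(t)\chi(t)$ on $A_M^-(P,\e)$. The cuspidal converse is exactly the paper's argument (noncompactness of the support forced by that formula), and there indeed no unitarity of $c$ is used.

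There is, however, a genuine gap in the converse for the square-integrable and tempered cases, at the step you yourself single out. You propose to ``isolate the contribution of $A_M^-(P,\e)/A_G$'' to $\int_{A_G\bs A_0}|W(t)|^{p}\d_{P_0}^{-1}(t)\,dt$ and to test divergence of $\int_{A_M^-(P,\e)/A_G}\d_P(t)^{p/2}|\chi(t)|^{p}\d_{P_0}(t)^{-1}dt$. But for a proper standard parabolic $P$ strictly containing $P_0$ one has $A_M\subsetneq A_0$, so $A_M^-(P,\e)$ is a closed subset of Haar measure zero in $A_0$: its contribution to the integral over $A_G\bs A_0$ is zero, and membership of $W$ in $L^{p}(A_GU_0\bs G,c\otimes\psi)^\infty$ gives, by itself, no control on the restriction of $W$ to a lower-dimensional subgroup. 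To make the computation legitimate you must thicken $A_M^-(P,\e)$ to an open subset of $A_0$ on which $W$ is still governed by $\d_P^{1/2}\chi$; for instance, use that $W$ is right-invariant under some $K\in\OI(G)$, so that $W(tu)=\d_P^{1/2}(t)\chi(t)$ on the tube $A_M^-(P,\e)(K\cap A_0)$, and only then run your lattice computation on $A_M/A_M^1A_G$. The paper's route is the packaged version of exactly this: it restricts $F=W\d_{P_0}^{-1/(2+r)}$ to the open region $A_{0,P}^-(\e)$, notes via Theorem \ref{theorem asymptotic expansion} that this restriction is $A_M$-finite with $\d_P^{r/2(2+r)}\chi$ among its associated exponents (using $\d_{P_0}=\d_P$ on $A_M$), and invokes \cite[Proposition 4.4.4]{Casselman}, which is precisely the equivalence ``summability on $A_{0,P}^-(\e)$ $\Longleftrightarrow$ positivity of all associated exponents'' that you are attempting to reprove by hand. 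With such a repair your divergence estimate leads to the paper's conclusion ($\d_P^{-r/2(2+r)}\chi$ positive for all $r>0$, hence $\chi$ non-negative, in the tempered case), but as written the key analytic step does not go through.
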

\begin{proof}
One direction of all statements easily follows from Theorem \ref{theorem asymptotic expansion} and the Iwasawa decomposition 
$G=U_0M_0 K_0$ together with the compactness of the quotient $M_0/A_0$. We give the proof of the converse direction in the cuspidal and the tempered case, the square integrable case being similar to the tempered case. Suppose that 
$\pi\subset \mathcal{C}_c^\infty(A_G U_0\backslash G, c\otimes \psi)$ and let $P=MU$ be a proper standard parabolic subgroup of $G$. Suppose that $J_P(\pi)\neq \{0\}$ so that $\E(A_M,J_P(\pi))\neq \emptyset$ and take $\chi$ inside it. The function $W$ given by Lemma \ref{lemma eigenvector in the Jacquet module} can't belong to $\mathcal{C}_c^\infty(A_G U_0\backslash G, c\otimes \psi)$, hence $J_P(\pi)$ must be equal to $\{0\}$ and $\pi$ is cuspidal. If $\pi \subset L^{2+r}(A_G U_0\backslash G, c\otimes \psi)^\infty$ for all $r>0$ then
\[\int_{A_G\backslash A_0} |W(t)|^{2+r}\d_{P_0}^{-1}(t) dt<+\infty\] for all $W\in \pi$ thanks to the Iwasawa decomposition. In particular for all standard parabolic subgroups $P$, the function $F =W \d_{P_0}^{-1/(2+r)}$ must satisfy that $|F|^{2+r}$ is summable on $A_G K_{A_0}\backslash A_{0,P}^-(\e)$ for some $0<\e\leq 1$ and some compact open subgroup $K_{A_0}$ of $A_0$. Now take $P=MU$ a proper standard parabolic subgroup of $G$ such $J_P(\pi)\neq \{0\}$ if it exists (if not 
$\pi$ is cuspidal hence tempered). Take $\chi\in \E(A_M,J_P(\pi))$ and $W$ as in Lemma \ref{lemma eigenvector in the Jacquet module}. First we notice that thanks to the asymptotic expansion of theorem \ref{theorem asymptotic expansion}, for $\e$ small enough, the function 
$F$ restricts to $A_{0,P}^-(\e)$ as an $A_M$-finite function. Then by our choice of $W$, the character $\d_P^{1/2}\chi$ is associated to 
$W_{|A_{0,P}^-(\e)}$ in the sense of \cite[Proposition 4.4.4]{Casselman}, hence 
$\d_P^{1/2}\chi\d_{P_0}^{-1/(2+r)}=\d_P^{r/2(2+r)}\chi$ (as $\d_{P_0}$ coincides with $\d_P$ on $A_M$) is associated to $F_{|A_{0,P}^-(\e)}$. We conclude by \cite[Proposition 4.4.4]{Casselman} that $\d_{P}^{-r/2(2+r)}\chi$ must be positive for all $r>0$, hence 
that $\chi$ must be non negative. We conclude that $\pi$ is tempered.
\end{proof}

\section{Asymptotics of integral $\ell$-adic Whittaker functions}

In this last section we consider for $\ell\neq p$ a prime number the field $\Ql$ (a fixed algebraic closure of $\mathbb{Q}_\ell$) instead of $\C$. We fix an isomorphism between $\Ql$ and $\C$ which we use to transport smooth representations of any closed subgroup $H$ of $G$ over $\C$ to smooth representations of $H$ over $\Ql$, this in particular applies the modulus character 
$\d_H$. We denote by $\Zl$ the ring of integers of $\Ql$, and by $\Q_{\ell}^u$ the maximal unramified extension of $\Q_{\ell}$. We denote by $\Q_{\ell}(p^\infty)$ the algebraic extension of $\Ql$ obtained by adjoining all roots of unity of order a power of $p$, hence 
$\Q_{\ell}(p^\infty)\subset \Q_{\ell}^u$. If $E\subset \Ql$ is an extension of $\Q_\ell$ we denote by $O_E$ its ring of integers ($O_E=E\cap \Zl$). We note that if $E$ is contained in a finite extension of $\Q_{\ell}^u$, then $O_E$ is principal. 

\subsection{A result of Vignéras on integral submodules of $\Wh(G,\psi)$}\label{section V}

Let $(\pi,V)$ be an admissible representation of $G$, following \cite[I. Definition 9.1]{Vbook} we say that 
a $G$-stable $\Zl$-submodule $\L$ of $V$ is an admissible lattice if $\L^K$ is a $\Zl$-lattice in $V^K$ (i.e. 
free of rank $\dim_{\Ql}(V^K)$) for all $K\in \OI(G)$. Let $E\subset \Ql$ be an algebraic extension of $\Q_{\ell}$. We say that a smooth ($\ell$-adic) representation $(\pi,V)$ is realizable over $E$ if it has an $E$-structure $(\pi_E,V_E)$: the $E$-vector space $V_E\subset V$ is $G$-stable and $V_E\otimes_E \Ql= V$. If $E$ is contained in a finite extension of $\Q_{\ell}^u$ and $(\pi,V)$ is admissible and has an $E$-structure, then $(\pi,V)$ is integral if and only if there is a $G$-stable $O_E$-lattice $\L_E$ in $V_E$. Indeed it follows from \cite[I.9.2]{Vbook} that if $\L_E$ is such a lattice, then $\L=\L_E\otimes_{O_E} \Zl$ is an admissible lattice in $V$, and conversely if $\L$ is an admissible lattice in $V$ then $\L_E:=\L\cap V_E$ is an admissible lattice in $V_E$ as we now justify: for any $x\in V$ there is $m_x\in \N$ large enough such that $\ell^{m_x} x\in \L$ from which we deduce that for any $K\in \OI(G)$ we the $E$-span of $\L_E^K$ is $V_E^K$, and moreover $\L_E$ being torsion free and $E$ being principal we deduce that $\L_E^K$ is a free $O_E$-module. 
By \cite[II.4.7]{Vbook} any irreducible representation $\pi$ of $G$ is realizable over a finite extension of 
$\Q_{\ell}$, and the proof of this fact extends to finite length representations, we add it here.

\begin{prop}\label{proposition finite length realizable}
Let $(\pi,V)$ be an $\ell$-adic representation of finite length, then it is realizable over a finite extension of 
$\Q_{\ell}$.
\end{prop}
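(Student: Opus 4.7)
The plan is to proceed by induction on the length $n$ of $(\pi,V)$, using the irreducible case \cite[II.4.7]{Vbook} as the base case $n=1$. For the inductive step, pick an irreducible subrepresentation $V_1 \subseteq V$ and set $V_2 := V/V_1$, which has length $n-1$. By the base case and the inductive hypothesis, $V_1$ and $V_2$ are each realizable over finite extensions $E_1, E_2$ of $\Q_\ell$. Replacing them by the compositum $E_0 := E_1 E_2$ (still a finite extension) and extending scalars, we obtain $E_0$-structures $V_{1,E_0}$ and $V_{2,E_0}$.

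The isomorphism class of the extension $V$ of $V_2$ by $V_1$ is encoded in a class $\xi \in \Ext^1_{\Rep_{\Ql}(G)}(V_2,V_1)$ in the category of smooth $\Ql[G]$-modules. The second step is to establish flat base change
\[\Ext^1_{\Rep_{\Ql}(G)}(V_2, V_1) \;\cong\; \Ext^1_{\Rep_{E_0}(G)}(V_{2,E_0}, V_{1,E_0}) \otimes_{E_0} \Ql,\]
which holds because $\Ql$ is flat over $E_0$ and $V_{2,E_0}$ is finitely presented as a non-degenerate module over the idempotented Hecke algebra $\mathcal{H}(G)_{E_0}$. The finite presentation is obtained by choosing $K \in \OI(G)$ small enough that $V_{2,E_0}^K$ generates $V_{2,E_0}$ (possible because $V_2$ has finite length and admits nonzero $K$-invariants in each composition factor for $K$ small enough, and because $\ell \neq p$ makes the functor of $K$-invariants exact), and then noting that the surjection $\mathcal{H}(G)_{E_0}\otimes_{\mathcal{H}(G,K)_{E_0}} V_{2,E_0}^K \twoheadrightarrow V_{2,E_0}$ has a kernel that is itself finitely generated, using admissibility at a further small $K'$.

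Writing $\xi = \sum_{i=1}^r c_i \otimes \alpha_i$ with finitely many $c_i \in \Ql$ and $\alpha_i \in \Ext^1_{\Rep_{E_0}(G)}(V_{2,E_0}, V_{1,E_0})$, define $E := E_0(c_1,\ldots,c_r) \subset \Ql$. This is a finite extension of $\Q_\ell$, and after extension of scalars the class $\xi$ comes from a class $\xi_E \in \Ext^1_{\Rep_E(G)}(V_{2,E}, V_{1,E})$. The corresponding extension $V_E$ of $V_{2,E}$ by $V_{1,E}$ is then an $E$-structure on $V$.

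The main obstacle is the justification of flat base change for $\Ext^1$, which hinges on the finite presentation of admissible finite length smooth representations over the idempotented Hecke algebra. An alternative route that avoids this, and that stays closer to Vignéras's original argument, is to descend directly the $\mathcal{H}(G,K)_{\Q_\ell}\otimes_{\Q_\ell}\Ql$-module structure on $V^K$ to a finite extension of $\Q_\ell$ (for $K$ chosen so that $V^K$ generates $V$) and then set $V_E := \mathcal{H}(G)_E \otimes_{\mathcal{H}(G,K)_E} V^K_E$; here the subtle point is that the image of $\mathcal{H}(G,K)_{\Q_\ell}$ in the finite-dimensional $\Ql$-algebra $\End_{\Ql}(V^K)$ must be shown to spread out a $\Q_\ell$-form of its $\Ql$-span, which again uses finite length crucially via a Bernstein-center type reduction.
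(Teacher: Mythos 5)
Your induction-on-length strategy hinges entirely on the base-change isomorphism $\Ext^1_{\Rep_{\Ql}(G)}(V_2,V_1)\cong \Ext^1_{\Rep_{E_0}(G)}(V_{2,E_0},V_{1,E_0})\otimes_{E_0}\Ql$ (and its analogue for $E_0\subset E$), and this is exactly the step you do not prove. The justification you sketch does not hold as stated: to know that the kernel of $\mathcal{H}(G)_{E_0}e_K\otimes_{\mathcal{H}(G,K)_{E_0}}V_{2,E_0}^K\rightarrow V_{2,E_0}$ is finitely generated, "admissibility at a further small $K'$" is neither available (the induced module on the left need not be admissible) nor sufficient (finite dimensionality of $K'$-invariants of the kernel does not make the kernel finitely generated); what is really needed is that $\mathcal{H}(G)e_K$ is a Noetherian object, i.e.\ Bernstein's local Noetherianity theorem, and moreover over the non-algebraically closed field $E_0$, which requires a faithfully flat descent argument from $\Ql\cong\C$, plus the identification of Yoneda $\Ext^1$ in $\Rep(G)$ with Hecke-module $\Ext^1$ and the compatibility of $(-)^K$ with scalar extension. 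None of this is supplied, and you flag it yourself as "the main obstacle"; as written the proposal is therefore incomplete precisely at its pivotal step.

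The extension-class machinery is in fact unnecessary, and the paper's proof avoids it entirely with no Noetherianity or Bernstein-center input. Choose $K\in\OI(G)$ so small that every composition factor has nonzero $K$-invariants; then $\pi(G).V^K=V$, and by the equivalence of \cite[II.3.12]{Vbook} the representation $V$ is recovered from the $\mathcal{H}(G,K)$-module $V^K$ (finite dimensional, since finite length implies admissible). By \cite[II.2.13]{Vbook} the algebra $\mathcal{H}(G,K)$ is finitely generated, so its image in $\End_{\Ql}(V^K)\simeq \M_n(\Ql)$ is generated by finitely many matrices; their entries are algebraic over $\Q_\ell$, hence generate a finite extension $E$, the $E$-span $W_E$ of a basis of $V^K$ is an $\mathcal{H}(G,K)_E$-module of finite length, and transporting $W_E$ back through the equivalence over $E$ gives the desired $E$-structure $V_E$ with $V_E^K=W_E$. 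Your "alternative route" gestures at this argument, but the "subtle point" you raise there (spreading out a $\Q_\ell$-form of the image algebra via a Bernstein-center type reduction) is a red herring: no descent to $\Q_\ell$ is needed, only the elementary observation that finitely many elements of $\Ql$ generate a finite extension of $\Q_\ell$; finite length enters only through admissibility of $V^K$ and the choice of a single $K$ seeing all composition factors.
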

\begin{proof}
We suppose that $V\neq \{0\}$. Take $\{0\}=V_0 \subsetneq \dots \subsetneq V_r=V$ a composition series of $V$ of length $r$. Take $K\in \OI(G)$ small enough such that each $(V_{i+1}/V_i)^K$ is nonzero. Then one checks by induction on the length of 
$V$ that $\pi(G).V^K=V$. Denote by $\mathcal{H}(G,K)$ the Hecke algebra of bi-$K$-invariant functions inside $\mathcal{C}_c^\infty(G)$, it is well-known that the category of smooth representations spanned over $G$ by their $K$-invariants is 
equivalent to that of $\mathcal{H}(G,K)$-modules (\cite[II. 3.12]{Vbook} for example). Now $\mathcal{H}(G,K)$ is finitely generated (see \cite[II. 2.13]{Vbook}), hence the image of $\mathcal{H}$ of $\mathcal{H}(G,K)$ inside $\mathrm{End}_{\Ql}(V^K)$ is finitely generated. Take $B=(e_1,\dots,e_n)$ a basis of $V^K$, then $\mathrm{Mat}_{B}(\mathcal{H})$ is a finitely generated 
subalgebra of $\M_n(\Ql)$, hence is contained in $\M_n(E)$ for $E$ a finite extension of $\Q_\ell$. Set $W_E=\mathrm{Vect}_E(e_1,\dots,e_n)$, then $W_E$ is a finite length $\mathcal{H}(G,K)_E$-module (where $\mathcal{H}(G,K)_E$ consists of 
functions in $\mathcal{H}(G,K)$ with values in $E$). Then by \cite[II. 3.12]{Vbook} there is a unique $E[G]$-module 
of finite length $V_E$ such that $V_E^K=W_E$. One checks that $V_E$ is an $E$-structure for $V$. 
\end{proof}

We notice that the non degenerate character $\psi:U_0\rightarrow \Ql$ takes values in $O_{\Q_{\ell}(p^\infty)}$. Whenever $R$ is a subring of $\Ql$ which contains $O_{\Q_{\ell}(p^\infty)}$, we denote by $\Wh(G,\psi)_R$ the $R$-submodule of 
$\Wh(G,\psi)$ of functions with values in $R$ and set $\Wh_c(G,\psi^{-1})_R=\Wh_c(G,\psi^{-1})\cap \Wh(G,\psi^{-1})_R$. Combining \cite[Theorem IV.2.1, Corollary II.8.3, Proposition II.8.2]{V2} of Vignéras we obtain the following result.

\begin{prop}\label{proposition V}
Let $E$ be an algebraic extension of $\Q_{\ell}(p^\infty)$ contained in a finite extension of $\Q_{\ell}^u$ and $(\pi_E,V_E)\subset (\Wh(G,\psi)_E,\rho^\vee)$ be an admissible representation of $G$ which is integral. Then the $O_E[G]$-module \[\L_E:=V_E\cap \Wh(G,\psi)_{O_E}\] is an $O_E$-lattice in $V_E$. Moreover $(\pi_E^\vee,V_E^\vee)$ is a quotient of $(\Wh_c(G,\psi^{-1})_E,\rho)$ and if we denote by $s:W'\in \Wh_c(G,\psi^{-1})_E \mapsto \overline{W'}\in V_E^\vee$ the surjection such that the injection $i:V_E \subset  \Wh(G,\psi)_E$ is dual to it, then $\L_E':=\overline{\Wh_c(G,\psi^{-1})_{O_E}}=s(\Wh_c(G,\psi^{-1})_{O_E})$ is an $O_E[G]$-lattice in $\pi_E^\vee$ which identifies $\L_E$ with the dual lattice of $\L_E'$: 
\[\L_E=\{W\in V_E,\ \langle \Wh_c(G,\psi^{-1})_{O_E}, W\rangle \subset O_E\}=\{W\in V_E,\ \langle \L_E', W\rangle \subset O_E\}.\] 
\end{prop}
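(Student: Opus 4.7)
The plan is to combine the three cited results of Vignéras. The underlying compatibility is that $\psi$ takes values in $O_{\Q_{\ell}(p^\infty)}\subset O_E$ and that the Haar measures (normalized to mass $1$ on intersections with $K_0$) together with the modulus characters take values in $O_E^\times$, since $\ell\neq p$ makes the residual cardinal of $F$ a unit in $O_E$. Consequently $\langle\cdot,\cdot\rangle_G$ restricts to an $O_E$-valued pairing between $\Wh_c(G,\psi^{-1})_{O_E}$ and $\Wh(G,\psi)_{O_E}$, which is what makes the lattice statements compatible with the duality.

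First I would apply Theorem IV.2.1 of \cite{V2}, Vignéras's integrality theorem for integral admissible representations embedded in a Whittaker model, which asserts directly that $V_E\cap \Wh(G,\psi)_{O_E}$ is an admissible $O_E$-lattice in $V_E$. Next I would use Proposition II.8.2 and Corollary II.8.3 of \cite{V2} to see that the pairing map $W'\mapsto \overline{W'}:=\langle W',\cdot\rangle_G|_{V_E}$ is a $G$-equivariant surjection from $\Wh_c(G,\psi^{-1})_E$ onto $V_E^\vee$, and that its restriction to $\Wh_c(G,\psi^{-1})_{O_E}$ is an admissible $O_E[G]$-lattice $\L_E'$ in $V_E^\vee$.

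It remains to prove the duality $\L_E=\{W\in V_E,\ \langle \L_E',W\rangle\subset O_E\}$. The inclusion $\subset$ is immediate from the previous paragraph: for $W\in \L_E$ and $W'\in \Wh_c(G,\psi^{-1})_{O_E}$, the integral $\int_{U_0\backslash G}W'W$ is a finite $O_E$-linear combination of products of values of $W'$ and $W$ (using that $W'$ has compact support modulo $U_0$ and the volume normalizations above). The reverse inclusion is where the main obstacle lies: one must show that if $W\in V_E$ pairs integrally with every element of $\Wh_c(G,\psi^{-1})_{O_E}$ then all values of $W$ lie in $O_E$. The approach is, for each $g\in G$, to exhibit a test function $W'\in \Wh_c(G,\psi^{-1})_{O_E}$ supported on a small neighborhood of $U_0 g$ in $U_0\backslash G$ on which $\psi^{-1}$ is trivial, so that $\langle W',W\rangle_G$ computes $W(g)$ up to a unit in $O_E$; since $\psi^{-1}$ is valued in $O_E^\times$, the natural partition-of-unity construction stays integral, and plugging in such $W'$ forces $W(g)\in O_E$. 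I expect this integral-extraction step to be the technical heart of the argument.
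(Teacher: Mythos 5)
The paper does not actually reprove this statement: Proposition \ref{proposition V} is obtained by directly combining the quoted results of Vign\'eras, and in that combination the dual-lattice identification $\L_E=\{W\in V_E,\ \langle \L_E',W\rangle\subset O_E\}$ is not an extra verification to be supplied by hand --- it is exactly the content carried by \cite[Theorem IV.2.1]{V2} (with \cite[II.8.2, II.8.3]{V2} providing the lattice--duality formalism for admissible representations and the quotient $\Wh_c(G,\psi^{-1})_E\twoheadrightarrow V_E^\vee$ statement). Your plan agrees with the paper on the first two assertions, but then treats the duality as something to re-derive, and the step you defer as ``the technical heart'' is precisely the quoted theorem; as written, the main point is sketched rather than proved.

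Moreover the integrality bookkeeping on which your sketch rests is incorrect, and the error sits in the direction you call immediate. With the paper's normalization $\mu(H\cap K_0)=1$, volumes of small compact open subsets of $U_0\backslash G$ have the shape $p^{a}/[K_0:K]$ with $K\in\OI(G)$ and $a\in\Z$, and indices $[K_0:K]$ are \emph{not} powers of $p$: they contain factors such as $q-1$, which may well be divisible by $\ell$. So the assertion that the measures ``take values in $O_E^\times$ since $\ell\neq p$'' is false, and integration of $O_E$-valued functions against these measures does not preserve $O_E$ (already $\vol(1+\p^k)=\bigl((q-1)q^{k-1}\bigr)^{-1}$ for the multiplicative measure normalized on $\o^\times$). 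Consequently the inclusion $\L_E\subset\{W:\langle \Wh_c(G,\psi^{-1})_{O_E},W\rangle\subset O_E\}$ is exactly the delicate point, requiring the integrally normalized measures and arguments of \cite{V2}, and cannot be dismissed as a finite $O_E$-linear combination of values. By contrast, the reverse inclusion you single out as the obstacle is the robust one: your test function gives $\langle W',W\rangle=\vol(U_0\backslash U_0gK)\,W(g)$, and although this volume is not a unit, its inverse $p^{-a}[K_0:K]$ lies in $\Zl$, so $W(g)\in O_E$ follows. To close the argument you should either quote \cite[Theorem IV.2.1]{V2} for the duality statement, as the paper does, or redo the computation with Haar measures normalized on pro-$p$ open subgroups so that they are genuinely $\Zl$-valued; as proposed, the proof does not go through.
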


Note that with the above notations, if $E'$ is a finite extension of $E$, we have the inclusion $\L'_E\subset \L'_{E'}$ (in fact $ \L'_{E'}=\L'_E \otimes_{O_{E}} O_{E'}$). Indeed set $s':W'\in \Wh_c(G,\psi^{-1})_{E'} \mapsto \overline{W'}\in V_{E'}^\vee$ which we recall is by definition the surjection such that the injection  $i':V_{E'} \subset  \Wh(G,\psi)_{E'}$ is dual to it, then $s'$ restricts to $ \Wh_c(G,\psi^{-1})_E$ as $s$, or equivalently $s'=s \otimes_E {E'}$, because $i'$ also satisfies the relation 
$i'=i\otimes_E {E'}$. The claim follows. 

As a corollary we obtain:

\begin{corollary}\label{corollary extension of lattices}
Let $E$ be an algebraic extension of $\Q_{\ell}(p^\infty)$ contained in a finite extension of $\Q_{\ell}^u$ and $(\pi_E,V_E)\subset (\Wh(G,\psi)_E,\rho^\vee)$ be an admissible representation of $G$ which is integral. 
Let $E'$ be a finite extension of $E$ and $V_{E'}=V_E\otimes _E E'$, clearly $(\pi_{E'},V_{E'})\subset (\Wh(G,\psi)_{E'},\rho^\vee)$. Then $\L_{E'}:=V_{E'}\cap \Wh(G,\psi)_{O_{E'}}$ is equal to $\L_E\otimes_{O_E} O_{E'}$ for $\L_E$ as in Proposition \ref{proposition V}.
\end{corollary}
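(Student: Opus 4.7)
The plan is to apply Proposition \ref{proposition V} to both $E$ and $E'$, and then use the duality characterization of the integral lattice there to reduce the claim to an elementary statement about $O_E$-bases of $O_{E'}$.

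I first set up the algebraic framework. Since $E$, and hence $E'$, is contained in a finite extension of $\Q_{\ell}^u$, both $O_E$ and $O_{E'}$ are principal. The ring $O_{E'}$ is finitely generated and torsion-free as an $O_E$-module, hence free of some rank $n$; fix an $O_E$-basis $b_1,\dots,b_n$ of $O_{E'}$, which, by comparing with $\dim_E E'$, is also an $E$-basis of $E'$. The elementary observation I will use repeatedly is that whenever $c_1,\dots,c_n \in E$ satisfy $\sum_i c_i b_i \in O_{E'}$, each $c_i$ already lies in $O_E$: rewrite the element in its (unique) $O_E$-expression in the $b_i$ and invoke uniqueness of coordinates in the $E$-basis $\{b_i\}$. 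Note also that $(\pi_{E'},V_{E'})$ is automatically admissible, since $V_{E'}^K = V_E^K \otimes_E E'$, and integral, since $\L_E \otimes_{O_E} O_{E'}$ is a $G$-stable $O_{E'}$-lattice in $V_{E'}$; so Proposition \ref{proposition V} applies to $E'$ as well.

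The key technical step is the scalar-extension identity
\[\Wh_c(G,\psi^{-1})_{O_{E'}} \;=\; \Wh_c(G,\psi^{-1})_{O_E} \otimes_{O_E} O_{E'}.\]
Given $W \in \Wh_c(G,\psi^{-1})_{O_{E'}}$, I write pointwise $W(g) = \sum_i b_i W_i(g)$ with $W_i(g) \in O_E$, which is possible by the observation above applied to each $W(g) \in O_{E'}$. Each $W_i$ inherits smoothness from $W$ (right $K$-invariance of $W$ transfers to the $W_i$ by uniqueness of $\{b_i\}$-coordinates), transforms by $\psi^{-1}$ under $U_0$ (using that $\psi$ takes values in $O_{\Q_{\ell}(p^\infty)} \subseteq O_E$), and is supported inside the support of $W$, so is compactly supported modulo $U_0$. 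Applying the $O_E$-linear surjection $W' \mapsto \overline{W'}$, this identity yields $\L_{E'}' = \L_E' \otimes_{O_E} O_{E'}$.

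The conclusion is then a matter of duality. By Proposition \ref{proposition V} applied to $E'$,
\[\L_{E'} = \{\,W \in V_{E'} : \langle \L_{E'}', W\rangle \subseteq O_{E'}\,\} = \{\,W \in V_{E'} : \langle \L_E', W\rangle \subseteq O_{E'}\,\},\]
where the second equality uses $O_{E'}$-linearity of the pairing. The inclusion $\L_E \otimes_{O_E} O_{E'} \subseteq \L_{E'}$ is immediate by reading off values. Conversely, given $W \in \L_{E'}$, I expand $W = \sum_i b_i w_i$ with $w_i \in V_E$ via the $E$-basis $\{b_i\}$; for any $W' \in \L_E'$ the pairing $\langle W', W\rangle = \sum_i b_i \langle W', w_i\rangle$ lies in $O_{E'}$, and the elementary observation then forces each $\langle W', w_i\rangle \in O_E$. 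The dual description of $\L_E$ in Proposition \ref{proposition V} yields $w_i \in \L_E$ for every $i$, hence $W \in \L_E \otimes_{O_E} O_{E'}$. The only delicate point in the whole argument is the scalar-extension identity for $\Wh_c(G,\psi^{-1})_{O_{E'}}$: one must check that the pointwise decomposition $W = \sum_i b_i W_i$ produces components that are genuinely smooth $O_E$-valued Whittaker functions, which relies on $\psi$ being $O_E$-valued and on the rigidity of $O_E$-coordinates in the free module $O_{E'}$; everything else is routine dualization.
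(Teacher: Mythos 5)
Your proposal is correct, but it follows a genuinely different route from the paper's. The paper works inside the finite-rank lattices of $K$-fixed vectors: it expands $W\in\L_{E'}^K$ in an $O_E$-basis $(W_i)$ of $\L_E^K$, which is an $E'$-basis of $V_{E'}^K$, and proves the scalar coefficients lie in $O_{E'}$ by pairing against the dual basis of $(W_i)$, which lies in ${\L_E'}^K$ because $\L_E^K$ is the $O_E$-dual of ${\L_E'}^K$ (a biduality step for finite-rank lattices over the principal ring $O_E$); integrality of those pairings then comes from Proposition \ref{proposition V} over $E'$, exactly as in your argument. You instead decompose along the coefficient ring: writing $W=\sum_i b_i w_i$ with $w_i\in V_E$ for an $O_E$-basis $(b_i)$ of $O_{E'}$, you force each component $w_i$ into $\L_E$ using only the direction of the duality that Proposition \ref{proposition V} literally states (the characterization of $\L_E$ as the dual of $\L_E'$), together with the rigidity of $O_E$-coordinates in $O_{E'}$. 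What your route buys is the avoidance of the biduality step, and you also verify that Proposition \ref{proposition V} applies over $E'$, which the paper leaves implicit; what it costs is the finiteness and freeness of $O_{E'}$ over $O_E$, i.e.\ finiteness of the integral closure of the principal (hence Noetherian, integrally closed) ring $O_E$ in the finite separable extension $E'$ --- worth a word, since this is where the hypothesis that $E$ sits in a finite extension of $\Q_{\ell}^u$ enters --- and you should record that the coefficients $\langle W',w_i\rangle$ lie in $E$ (both functions are $E$-valued and the relevant volumes are rational) before invoking your coordinate observation. Finally, your scalar-extension identity $\Wh_c(G,\psi^{-1})_{O_{E'}}=\Wh_c(G,\psi^{-1})_{O_E}\otimes_{O_E}O_{E'}$, though correctly proved, is more than you need: in the converse inclusion you only use the trivial containment $\Wh_c(G,\psi^{-1})_{O_E}\subset\Wh_c(G,\psi^{-1})_{O_{E'}}$, since only one direction of your displayed double description of $\L_{E'}$ is ever invoked.
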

\begin{proof}
The inclusion $\L_E\otimes_{O_E} O_{E'}\subset \L_{E'}$ is obvious. To prove the converse take $W\in \L_{E'}$, and in fact take $W\in \L_{E'}^K$ for $K\in \OI(G)$ small enough. Because 
$\L_E^{K}$ is a lattice in $V_E^{K}$ any $O_E$-basis $(W_i)_i$ of $\L_E^K$ is an $E'$-basis of $V_{E'}^K$ hence $W=\sum_i a_i W_i$ for $a_i\in E'$. Because $\L_E^K=\{W\in V_E^K, \ \langle {\L_E'}^K, W\rangle \subset O_E \}$ is the dual lattice of ${\L_E'}^K$, the vectors $(\overline{W'_i})_i$ forming the dual basis of $(W_i)_i$ belong to ${\L_E'}^K$. Hence $a_i=\langle \overline{W_i'},W\rangle \in O_{E'}$ because $\overline{W_i'}\in \L'_E\subseteq \L_{E'}'$ as observed before the corollary. 
\end{proof}

The following result now follows.

\begin{thm}\label{theorem integral structure of Whittaker}
Let $(\pi,V)\subset (W(G,\psi),\rho^\vee)$ be an $\ell$-adic admissible representation realizable over an algebraic extension of $\Q_{\ell}$ contained in a finite extension of $\Q_{\ell}^u$. If $\pi$ is integral, then the $\Zl[G]$-module $V\cap W(G,\psi)_{\Zl}$ is an admissible lattice in $V$.
\end{thm}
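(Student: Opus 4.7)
The plan is to reduce the theorem to Proposition \ref{proposition V} and Corollary \ref{corollary extension of lattices} via a filtered colimit over finite extensions. Let $E_0\subset\Ql$ be the given algebraic extension of $\Q_\ell$ realizing $\pi$, and set $E:=E_0\cdot\Q_\ell(p^\infty)$. Since $\Q_\ell(p^\infty)\subset\Q_\ell^u$, $E$ is still contained in a finite extension of $\Q_\ell^u$ and is an algebraic extension of $\Q_\ell(p^\infty)$. The scalar extension $(\pi_E,V_E):=(\pi_{E_0},V_{E_0})\otimes_{E_0}E$ is an admissible $E$-structure of $\pi$ inside $\Wh(G,\psi)_E$, and remains integral since integrality is intrinsic. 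Proposition \ref{proposition V} then yields the $O_E[G]$-lattice $\L_E:=V_E\cap\Wh(G,\psi)_{O_E}$ in $V_E$.

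Next I express the $\Zl$-submodule of interest as a filtered union. Let $E'$ range over the directed system of finite extensions of $E$ inside $\Ql$, and set $V_{E'}:=V_E\otimes_E E'\subset\Wh(G,\psi)_{E'}$; each such $E'$ is still contained in a finite extension of $\Q_\ell^u$, so Corollary \ref{corollary extension of lattices} applies and gives $\L_{E'}:=V_{E'}\cap\Wh(G,\psi)_{O_{E'}}=\L_E\otimes_{O_E}O_{E'}$. Using $\Zl=\bigcup_{E'}O_{E'}$ we have $\Wh(G,\psi)_{\Zl}=\bigcup_{E'}\Wh(G,\psi)_{O_{E'}}$ and $V=\bigcup_{E'}V_{E'}$, and I claim
\[V\cap\Wh(G,\psi)_{\Zl}=\bigcup_{E'}\L_{E'}.\]
The inclusion $\supseteq$ is trivial. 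For $\subseteq$, fix $K\in\OI(G)$ and pick $W\in V^K\cap\Wh(G,\psi)_{\Zl}$: since $V^K=V_E^K\otimes_E\Ql$ is finite-dimensional, writing $W$ in an $E$-basis of $V_E^K$ shows $W\in V_{E'}^K$ for a finite extension $E'/E$ absorbing the finitely many scalars involved; then the values of $W$ lie in $\Zl\cap E'=O_{E'}$, so $W\in\L_{E'}^K$.

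Taking $K$-invariants and using $\L_{E'}^K=\L_E^K\otimes_{O_E}O_{E'}$ (the $O_{E'}$-span of any $O_E$-basis of the lattice $\L_E^K$, as the proof of Corollary \ref{corollary extension of lattices} establishes via the duality of Proposition \ref{proposition V}), I conclude
\[(V\cap\Wh(G,\psi)_{\Zl})^K=\bigcup_{E'}\L_{E'}^K=\L_E^K\otimes_{O_E}\Zl,\]
which is $\Zl$-free of rank $\operatorname{rank}_{O_E}\L_E^K=\dim_{\Ql}V^K$ and hence a $\Zl$-lattice in $V^K$. Doing this for every $K\in\OI(G)$ shows that $V\cap\Wh(G,\psi)_{\Zl}$ is an admissible lattice in $V$. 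The main technical input is Proposition \ref{proposition V} (via Vign\'eras's duality); the remaining work is formal, with the mildly delicate step being the $\subseteq$ inclusion in the filtered-union identity, which relies on the finite-dimensionality of $V_E^K$ to absorb finitely many scalars into a single finite extension $E'/E$.
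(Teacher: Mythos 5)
Your proof is correct and follows essentially the same route as the paper: pass to the compositum with $\Q_{\ell}(p^\infty)$, apply Proposition \ref{proposition V} to get $\L_E$, and identify $V\cap \Wh(G,\psi)_{\Zl}$ with $\L_E\otimes_{O_E}\Zl$ by absorbing the finitely many coefficients of a given $\Zl$-valued vector into a finite extension $E'/E$ and invoking Corollary \ref{corollary extension of lattices}. The only difference is presentational: you check admissibility level by level on $K$-invariants via a filtered union, where the paper proves the global equality with a basis of $\L_L$ and quotes Vignéras I.9.2 for admissibility of the base-changed lattice.
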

\begin{proof}
Call $E$ the extension of the statement and set $L=\langle \Q_{\ell}(p^\infty),E \rangle$, it is an extension of $\Q_{\ell}(p^\infty)$ contained in a finite extension of $\Q_{\ell}^u$. Then because $\pi$ has an $E$ structure, it has an $L$-structure and Proposition \ref{proposition V} shows that $\L_L:=V_L\cap W(G,\psi)_{O_{L}}$ is a $G$-stable $O_L$-lattice in $V_L$. The theorem will follow from the equality $\L_{\Zl}:=V\cap W(G,\psi)_{\Zl}=\L_L\otimes_{O_L} \Zl$. The inclusion $\L_L\otimes_{O_L} \Zl\subset \L_{\Zl}$ is clear. Conversely take $W\in \L_{\Zl}$, because $\L_L$ contains an $L$-basis 
$(W_i)_i$ of $V_L$ which must be a $\Ql$-basis of $V$, we can write $W =\sum_i a_i W_i$ for $a_i\in \Ql$. Set $L'=L((a_i)_i)$, it is a finite extension of 
$L$ because all $a_i$ are algebraic over $\Q_{\ell}$ hence over $L$, and $W$ takes values in $L'$ because all the $W_i$ do. But $W$ also takes values in $\Zl$ by definition, hence in 
$O_{L'}$, so $W\in \L_{L'}$. We deduce from Corollary \ref{corollary extension of lattices} that $W\in \L_L\otimes_{O_L} O_{L'}\subset \L_L\otimes_{O_L} \Zl$ and the result follows.
\end{proof}

We recall that finite length representations of $G$ are admissible (see \cite[VI.6.3 Théorème]{Renard}). From Proposition \ref{proposition finite length realizable} we obtain the following immediate consequence of Theorem \ref{theorem integral structure of Whittaker}.

\begin{corollary}\label{corollary lattice in finite length Whittaker}
Let $(\pi,V)\subset (W(G,\psi),\rho^\vee)$ be an $\ell$-adic representation of finite length. If $\pi$ is integral, then the $\Zl[G]$-module $V\cap W(G,\psi)_{\Zl}$ is an admissible lattice in $V$.
\end{corollary}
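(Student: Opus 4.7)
The plan is to deduce this corollary by stringing together the three results that immediately precede it, with no further work required.

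First, I would invoke \cite[VI.6.3 Théorème]{Renard}, which guarantees that any smooth $\ell$-adic representation of finite length of a reductive $p$-adic group is admissible. Thus $(\pi,V)$ is an admissible $\ell$-adic $G$-submodule of $(\Wh(G,\psi),\rho^\vee)$, putting us squarely in the setting of Theorem \ref{theorem integral structure of Whittaker}.

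Second, I would apply Proposition \ref{proposition finite length realizable} to $(\pi,V)$ to produce a finite extension $E/\Q_\ell$ over which $\pi$ is realizable. Since $E$ is a finite extension of $\Q_\ell$, it is automatically contained in a finite extension of $\Q_\ell^u$ (for instance in the finite subextension of the compositum $E \cdot \Q_\ell^u$ generated over $\Q_\ell^u$ by a primitive element of $E/\Q_\ell$). So the realizability hypothesis of Theorem \ref{theorem integral structure of Whittaker} is verified, and the theorem applies verbatim, giving precisely that $V \cap \Wh(G,\psi)_{\Zl}$ is an admissible $\Zl[G]$-lattice in $V$, which is the statement to be proved.

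There is no genuine obstacle here: all the technical content, in particular the passage through $L = \langle \Q_\ell(p^\infty), E \rangle$ and the use of Vignéras's Proposition \ref{proposition V} to identify the integral Whittaker lattice with the dual of an explicit lattice in the contragredient, is already packaged inside the proof of Theorem \ref{theorem integral structure of Whittaker}. The only conceivable point of friction is the verification that the $E$-structure furnished by Proposition \ref{proposition finite length realizable} is compatible with the Whittaker realization, but this compatibility is precisely what Theorem \ref{theorem integral structure of Whittaker} is stated to ensure, requiring merely abstract realizability over an extension contained in a finite extension of $\Q_\ell^u$.
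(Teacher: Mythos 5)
Your proposal is correct and follows exactly the paper's own route: finite length implies admissibility by \cite[VI.6.3 Théorème]{Renard}, Proposition \ref{proposition finite length realizable} provides realizability over a finite extension of $\Q_{\ell}$ (hence over an extension contained in a finite extension of $\Q_{\ell}^u$), and Theorem \ref{theorem integral structure of Whittaker} then gives the admissible lattice $V\cap \Wh(G,\psi)_{\Zl}$.
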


\subsection{The asymptotic expansion of integral Whittaker functions}

If $\pi$ is smooth admissible $\Ql[G]$-module of $G$, we say that $\E(A_G,\pi)$ is integral if all its elements take values in $\Zl^\times$. The following lemma is straightforward from \cite[I.9.3]{Vbook}.

\begin{lemma}\label{lemma decomposition of lattices into characteristic subspaces}
Let $V$ be an integral admissible $\Ql[G]$-module with $\Zl[G]$-admissible lattice $\L$. Then $\E(A_G,\pi)$ is integral and 
$\L=\bigoplus_{\chi\in\E(A_G,\pi)} \L_{(\chi)}$, where $\L_{(\chi)}=\L\cap V_{(\chi)}$.
\end{lemma}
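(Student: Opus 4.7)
The plan is to reduce the assertion to the finite-dimensional level by taking $K$-invariants, verify integrality of the characters directly, and then invoke the cited result [I.9.3, Vbook] to decompose the lattice.

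First I observe that since $A_G$ is central in $G$, each operator $\pi(a)$ commutes with the full $G$-action; hence the generalized characteristic subspaces $V_{(\chi)}$ are $G$-stable, and for every $K \in \OI(G)$ one has $V^K = \bigoplus_\chi V_{(\chi)}^K$ together with $\L^K \cap V_{(\chi)} = (\L_{(\chi)})^K$. Since $\L$ is the union of the $\L^K$ as $K$ runs through a basis of neighborhoods of the identity in $G$, it suffices to prove the decomposition $\L^K = \bigoplus_\chi (\L^K \cap V_{(\chi)})$ for each such $K$.

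Second, I would establish that $\E(A_G,\pi)$ is integral. Fix $\chi \in \E(A_G,\pi)$ and choose $K$ small enough that $V_{(\chi)}^K \neq 0$. The free $\Zl$-module $\L^K$ of finite rank is preserved by $\pi(a)$ for every $a \in A_G$, so $\pi(a)$ acts on $\L^K$ as an element of $\GL_n(\Zl)$ for $n = \dim_{\Ql} V^K$. Its characteristic polynomial on $V^K$ therefore has coefficients in $\Zl$ and constant term $(-1)^n \det \pi(a) \in \Zl^\times$, so all its roots, and in particular $\chi(a)$, lie in $\Zl^\times$. As this holds for every $a \in A_G$, the character $\chi$ is integral.

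Third, I would appeal to Vignéras' result [I.9.3, Vbook], applied to the $A_G$-stable lattice $\L^K$ inside the finite-dimensional space $V^K$ equipped with its integral commutative action: it provides spectral projectors onto the $V_{(\chi)}^K$ defined already over $\Zl$, yielding $\L^K = \bigoplus_\chi (\L^K \cap V_{(\chi)})$; patching over $K$ then gives $\L = \bigoplus_\chi \L_{(\chi)}$. The main obstacle lies precisely here: passing from the characteristic-zero decomposition of $V^K$ to a $\Zl$-decomposition of the lattice requires the spectral projections to preserve $\L^K$, which is the essential input provided by [I.9.3, Vbook], extracted from the integrality of the characters together with the commutativity of the $A_G$-action; the remaining steps are elementary bookkeeping about $K$-invariants.
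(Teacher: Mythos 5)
Your first two steps are fine: the reduction to $K$-invariants is correct (since $A_G$ is central, each $V_{(\chi)}$ is $G$-stable and $\L^K\cap V_{(\chi)}=(\L_{(\chi)})^K$), and the integrality of every $\chi\in\E(A_G,\pi)$ is correctly proved ($\pi(a)$ is an automorphism of the finite free $\Zl$-module $\L^K$, so its characteristic polynomial is monic over $\Zl$ with unit constant term). The gap is in the third step, and it is a genuine one: the principle you attribute to [Vbook, I.9.3] --- that integrality of the characters together with commutativity of the $A_G$-action already forces the spectral projectors onto the $V_{(\chi)}^K$ to preserve $\L^K$ --- is false. What lifting idempotents over the Henselian ring $\Zl$ gives is a decomposition of $\L^K$ indexed by the maximal ideals of the commutative, module-finite $\Zl$-algebra generated by the operators $\pi(a)$, i.e.\ by the \emph{reductions} of the characters modulo the maximal ideal $\mathfrak m$ of $\Zl$; two distinct characters of $\E(A_G,\pi)$ with the same reduction lie in the same block and cannot in general be separated integrally.

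Concretely, take $G=A_G=F^\times$, $\chi_1=\mathbf 1$ and $\chi_2$ unramified with $\chi_2(\varpi)=1+\ell$, and let $V=\Ql f_1\oplus \Ql f_2\subset \mathcal C^\infty(F^\times)$, where $f_i$ denotes the function $\chi_i$, with the lattice $\L=V\cap \mathcal C^\infty(F^\times)_{\Zl}$ of functions in $V$ with values in $\Zl$. Since $\chi_2(x)\equiv 1 \bmod \mathfrak m$ for all $x$, the function $u=\ell^{-1}(f_2-f_1)$ lies in $\L$, and $\L=\Zl f_1\oplus \Zl u$ is a $G$-stable admissible lattice (note $\rho(g)u=\chi_2(g)u+\ell^{-1}(\chi_2(g)-1)f_1\in\L$); yet $\L_{(\chi_1)}\oplus\L_{(\chi_2)}=\Zl f_1\oplus \Zl f_2$ does not contain $u$, i.e.\ the projector onto $V_{(\chi_1)}$ sends $u$ to $-\ell^{-1}f_1\notin\L$, even though both characters are integral and the action is commutative. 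So your argument can only yield the coarser decomposition $\L^K=\bigoplus_{\bar\chi}\bigl(\L^K\cap\bigoplus_{\chi\equiv\bar\chi \bmod \mathfrak m}V_{(\chi)}\bigr)$, and the finer statement you are asked to prove requires, in addition, that distinct characters of $\E(A_G,\pi)$ have distinct reductions --- an input that neither your integrality step nor any commutativity argument supplies. Be aware that the paper itself gives no argument beyond the bare reference to [Vbook, I.9.3], so supplying the missing hypothesis (or the grouping by congruence classes, which is what the cited result can actually deliver) is exactly where the real work lies, and it also affects the later integral asymptotic expansion in which this lemma is used.
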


For $P=MU$ a standard parabolic subgroup of $G$ and $\chi\in \widehat{A_M}$ with values in $\Zl^\times$, we denote by $\F(A_M)_{(\chi),\Zl}$ the $\Zl$-module of functions in 
$\F(A_M)_{(\chi)}$ taking values in $\Zl$, and by $\S(\M_P^G)_{\Zl}$ that of functions in $\S(\M_P^G)$ taking values in $\Zl$.   Whenever $(\pi,V)$ is an integral finite length $G$-submodule of $(\Wh(G,\psi),\rho^\vee)$, we set 
\[\L_{\pi}=V\cap \Wh(G,\psi)_{\Zl}.\] In particular $\L_{\pi}$ is an lattice in $V$ according to Corollary
\ref{corollary lattice in finite length Whittaker}. First we prove the expected asymptotic expansion when $G/A_G$ has split rank $0$.

\begin{lemma}
Suppose that $G/A_G$ has split rank $0$. Let $V$ be an integral $\Ql[G]$-submodule of $\Wh(G,\psi)$ of finite length and $W\in \L_{\pi}$, then for each $\chi\in \E(A_G,\pi)$, there is a finite set $I_{W,\chi}$ such that one can write for $z\in A_G$ 
\[ W(z)=\sum_{\chi\in \E(A_G,\pi)}\sum_{i\in I_{W,\chi}} f_i(z)\phi_i(\r_G(z))\] 
for some functions $f_i\in \F(A_G)_{(\chi),\Zl}$ and $\phi_i\in \S(\M_G^G)_{\Zl}$ (here $\phi_i$ is just a constant in $\Zl$).
\end{lemma}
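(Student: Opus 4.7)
The plan is to apply the integrality results of Section 8.1 in order to decompose the admissible lattice $\L_\pi$ into its generalized $A_G$-character components, and then to verify that each component restricts to $A_G$ as an integral element of $\F(A_G)_{(\chi),\Zl}$.

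Since $V \subset \Wh(G,\psi) = \mathcal{C}^\infty(G)$ is integral of finite length, Corollary \ref{corollary lattice in finite length Whittaker} yields that $\L_\pi = V \cap \Wh(G,\psi)_{\Zl}$ is a $\Zl[G]$-admissible lattice in $V$. Applying Lemma \ref{lemma decomposition of lattices into characteristic subspaces} with respect to the $A_G$-action, $\E(A_G,\pi)$ is integral and $\L_\pi = \bigoplus_{\chi \in \E(A_G,\pi)} \L_{\pi,(\chi)}$, with $\L_{\pi,(\chi)} = \L_\pi \cap V_{(\chi)}$. Hence the given $W \in \L_\pi$ admits a finite decomposition $W = \sum_\chi W_\chi$ with each $W_\chi \in \L_{\pi,(\chi)}$.

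Next I would check that $W_\chi|_{A_G} \in \F(A_G)_{(\chi),\Zl}$. Integrality follows at once from $W_\chi \in \L_\pi \subset \Wh(G,\psi)_{\Zl}$. For the generalized character condition, observe that for $a \in A_G$ and $z \in A_G$ we have $(\rho^\vee(a) W_\chi)(z) = W_\chi(za)$; consequently the identity $(\rho^\vee(a) - \chi(a)\mathrm{Id})^n W_\chi = 0$, which by the definition of $V_{(\chi)}$ holds for some $n$ uniformly in $a \in A_G$, evaluates on $A_G$ to the analogous identity for $W_\chi|_{A_G}$ under right translation on $\mathcal{C}^\infty(A_G)$, placing it in the generalized $\chi$-eigenspace.

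Finally, in the split rank one situation of the lemma, $\M_G^G$ reduces to a point and $\r_G$ is trivial on $A_G$, so that $\S(\M_G^G)_{\Zl} = \Zl$ and $\phi_i$ may be taken to be a constant. Choosing $I_{W,\chi}$ to be a singleton with $f_* = W_\chi|_{A_G}$ and $\phi_* = 1 \in \Zl$ for each $\chi \in \E(A_G,\pi)$, the asserted expansion
\[W(z) = \sum_{\chi \in \E(A_G,\pi)} \sum_{i \in I_{W,\chi}} f_i(z)\phi_i(\r_G(z))\]
follows. The argument is essentially bookkeeping once the integral lattice decomposition is in place; there is no genuine obstacle beyond verifying that restriction to $A_G$ preserves both integrality and the generalized character condition.
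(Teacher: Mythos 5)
Your proof is correct, and it reaches the expansion by a slightly different route than the paper, although both rest on the same two inputs, namely Corollary \ref{corollary lattice in finite length Whittaker} (which gives the admissible lattice $\L_{\pi}$) and Lemma \ref{lemma decomposition of lattices into characteristic subspaces} (which gives integrality of $\E(A_G,\pi)$ and the decomposition of the lattice into generalized characteristic pieces). The paper produces the integral finite functions as matrix coefficients: it passes to the finite-dimensional cyclic module $\Ql[G].W$, chooses a $\Zl$-basis $(W_1,\dots,W_r)$ of its lattice, writes $\rho^\vee(z)W=\sum_i h_i(z)W_i$ so that the $h_i$ are entries of the matrix of $z\in A_G$ acting on a stable lattice (hence visibly $\Zl$-valued finite functions, sorted into the spaces $\F(A_G)_{(\chi),\Zl}$ via the decomposition lemma), and then evaluates at the identity to get $W(z)=\sum_i h_i(z)W_i(1)$ with $W_i(1)\in\Zl$. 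You instead decompose $W$ itself inside $\L_{\pi}$ as $\sum_\chi W_\chi$ and take $f_\chi=W_\chi|_{A_G}$, which forces you to check that this restriction lies in $\F(A_G)_{(\chi),\Zl}$; your verification is sound: integrality is immediate since $W_\chi\in\Wh(G,\psi)_{\Zl}$, and the generalized-character identity restricts to $A_G$ because restriction intertwines right translation by $A_G$, with the exponent $n$ uniform in $a$ (as in the paper's definition of $V_{(\chi)}$), so $W_\chi|_{A_G}$ is indeed an $A_G$-finite function lying in the $\chi$-characteristic subspace. What each route buys: yours avoids introducing the auxiliary cyclic submodule and a choice of basis, working directly with $\L_{\pi}$, at the price of this (easy) restriction check; the paper's matrix-coefficient device makes finiteness and integrality of the $f_i$ manifest without any such check. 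Your closing remark that $\S(\M_G^G)_{\Zl}=\Zl$ and that one may take $\phi_i=1$ agrees with the paper's parenthetical, so there is no gap there either.
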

\begin{proof}
Note that the $\Ql[G]$-module $\pi':=\Ql[G].W$ has finite dimension because $G/A_G$ is compact and $A_G$ is commutative, hence we can consider a basis $(W_1,\dots,W_r)$ of the $\Zl[G]$-lattice $\L_{\pi'}$. The action of $A_G$ on $\L_{\pi'}$ in this basis has coefficients given by functions in $\F(A_G)_{\Zl}$. Writing $W$ as a sum of the $W_i$'s with integral coefficients, we see that $\pi(z)W$ is of the form $\sum_{i=1}^r h_i(z) W_i$ for $h_i\in \F(A_G)_{\Zl}$, hence evaluating $\pi(z)W$ at $1$ we see that 
$W$ is a linear combination with integral coefficients (the scalars $W_i(1)\in \Zl$) of the functions $h_i$, and we conclude by applying Lemma \ref{lemma decomposition of lattices into characteristic subspaces}.
\end{proof}

It was expected for a long time that whenever $\pi$ is admissible and integral, all its Jacquet modules are integral (see \cite{Dat}). Though this fact sounds elementary it is far from being the case and Dat proved it (with sophisticated arguments) in \cite{Dat} at least when $F$ has characteristic zero and $G$ is $\GL_n(F)$ or an inner form of it, or a classical group. However this statement has been very recently shown to hold in full generality in \cite{DHKM22}, as a consequence of (the very deep results of) \cite{FS} together with some finiteness results on the Galois side of the local Langlands correspondence established in \cite{DHKM22}. This allows us to remove the hypothesis that the Jacquet modules of $\pi$ are integral in the statement below,  which is an integral version of Theorem \ref{theorem asymptotic expansion}.

\begin{thm}\label{theorem integral asymptotic expansion}
Let $\pi$ be an integral $\Ql[G]$-submodule of $W(G,\psi)$ of finite length with integral Jacquet modules. Then $\E(A_M,J_P(\pi))$ is integral for each $P=MU \supseteq P_0$. Moreover for each $P=MU \supseteq P_0$ and $\chi\in \E(A_M,J_P(\pi))$, there is a finite indexing set $I_{W,\chi}$ such that one can write for $t\in A_0$ 
\[W(t)=\sum_{P=MU \supseteq P_0} \d_P(t)^{1/2} \sum_{\chi\in \E(A_M,J_P(\pi))}\sum_{i\in I_{W,\chi}} f_i(t)\phi_i(\r_G(t))\]
for some functions $f_i\in \F(A_0)_{(\chi),\Zl}$ and $\phi_i\in \S(\M_P^G)_{\Zl}$.
\end{thm}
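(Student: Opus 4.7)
The plan is to induct on the split rank $r$ of $G$, replaying the argument of Theorem \ref{theorem asymptotic expansion} while verifying that each step preserves $\Zl$-integrality. The integrality of $\E(A_M, J_P(\pi))$ for every standard parabolic $P=MU$ is immediate from Lemma \ref{lemma decomposition of lattices into characteristic subspaces} applied to the integral admissible $\Ql[M]$-module $J_P(\pi)$. The base case $r=1$ is the preceding lemma. For the inductive step, each Jacquet module $J_{P_J}(\pi)$ of a proper standard parabolic $P_J=M_JU_J$ again satisfies the theorem's hypotheses on $M_J$: it has finite length and is integral by assumption, and by transitivity its Jacquet modules with respect to standard parabolics of $M_J$ coincide with those of $\pi$ along parabolics of $G$ contained in $P_J$, hence are integral. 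Granting the key claim below, the inductive hypothesis yields an integral expansion of $W_{P_J}$, and we combine these expansions exactly as in Theorem \ref{theorem asymptotic expansion}: the cutoff functions $\mathbf{1}_{]0,\e[}(|\alpha(t)|)$ take values in $\{0,1\}\subset\Zl$, and the residual term $\Phi(t) = W(t)\prod_{\alpha\in\D_0}\mathbf{1}_{[\e,M]}(|\alpha(t)|)$ is integer-valued and $A_G$-finite, admitting the required decomposition as $f\cdot\phi$ with $f\in\F(A_0)_{(\mu),\Zl}$ and $\phi\in\S(\M_G^G)_{\Zl}$ by applying Lemma \ref{lemma decomposition of lattices into characteristic subspaces} to the $\Zl[G]$-module generated by $W$.

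The key claim is that for $W\in\L_\pi$, the constant term $W_{P_J}=D_{P_J}(W)$ takes values in $\Zl$, so that it belongs to the integral lattice $\L_{J_{P_J}(\pi)}:=J_{P_J}(\pi)\cap\Wh(M_J,\psi_{M_J})_{\Zl}$. To prove this I invoke the identification $D_{P_J}=BH_{P_J}$ from Theorem \ref{theorem BH=D} together with the duality description of integral lattices from Proposition \ref{proposition V} (applied to $J_{P_J}(\pi)$): it suffices to show that $\langle W'_M, W_{P_J}\rangle_{M_J}\in\Zl$ for every $W'_M\in\Wh_c(M_J,\psi_{M_J}^{-1})_{\Zl}$. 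The explicit Bushnell--Henniart formula $BH_{P_J^-}(W')=\vol(U_c^-)\d_{P_J}^{-1/2}W'_{|M_J}$ from Theorem \ref{theorem BH isomorphism} allows one to realize any such $W'_M$ as $BH_{P_J^-}(W')$ for some integral $W'\in\Wh_c(G,\psi^{-1})_{\Zl}$; the point is that $\vol(U_c^-)$ and the values of $\d_{P_J}^{\pm 1/2}$ lie in $\Zl^\times$ because $\ell\neq p$ (so $q^{1/2}$, being integral over $\Z$, is a unit in $\Zl$). Transporting the second adjointness pairing of Corollary \ref{corollary Bernstein form 2} through $\overline{BH}_P$ then yields, for $K$-invariant $W$ and $a\in A_{M_J}^-(P_J,\e_K)$,
\[\langle W'_M, \rho_{M_J}^\vee(a) W_{P_J}\rangle_{M_J} = \d_{P_J}^{-1/2}(a)\langle W', \rho^\vee(a) W\rangle_G,\]
which lies in $\Zl$ since the $G$-pairing is an integral over $U_0\bs G$ of the product of two $\Zl$-valued Whittaker functions and $\d_{P_J}^{-1/2}(a)\in\Zl$.

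The main obstacle is to extend integrality of this expression in $a$ from the chamber $A_{M_J}^-(P_J,\e_K)$ to the point $a=e$. Since $W_{P_J}$ is $A_{M_J}$-finite in the admissible module $J_{P_J}(\pi)$, the function $a\mapsto\langle W'_M, \rho_{M_J}^\vee(a) W_{P_J}\rangle_{M_J}$ is an exponential-polynomial on $A_{M_J}$ whose characters lie in the integral set $\E(A_{M_J}, J_{P_J}(\pi))$. A finite-difference argument---separating the characters by Vandermonde-type operators whose coefficients are built from differences of unit characters and hence live in $\Zl$---reduces the problem to the elementary statement that a polynomial $P\in\Ql[n_1,\dots,n_s]$ taking $\Zl$-values on a shifted cone $\{n_i\leq -N_0$ for all $i\}$ necessarily takes $\Zl$-values on all of $\Z^s$. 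Evaluating at $a=e$ then gives the integrality of the pairing, completing the key claim and hence the induction.
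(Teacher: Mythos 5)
Your proposal runs the same induction on the split rank as the paper, and your instinct to isolate as the crux the statement that $D_{P_J}$ sends $\L_\pi$ into $\L_{J_{P_J}(\pi)}$ (integrality of the constant term of an integral Whittaker function) is exactly right: this is what is needed to feed $W_{P_J}$ into the inductive hypothesis, and the paper's own proof is much terser here --- it only records that $J_P(\pi)$ is integral of finite length, that $\E(A_M,J_P(\pi))$ is integral, that $\L_{J_P(\pi)}$ is an admissible lattice in $\Xi_P(J_P(\pi))\subset \Wh(M,\psi_M)$ by Corollary \ref{corollary lattice in finite length Whittaker}, and that the cut-off functions are $\Zl$-valued, and then asserts that the induction of Theorem \ref{theorem asymptotic expansion} goes through. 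So you are supplying a step the paper leaves implicit. That said, your route to the key claim (through $D_P=BH_P$, the dual-lattice description of Proposition \ref{proposition V} applied to $J_P(\pi)$, integral surjectivity of $BH_{P^-}$, and the second adjointness pairing) is heavier than necessary: by Theorem \ref{theorem Xi vs D} (or Corollary \ref{corollary admissible subs of W}), $W_P$ agrees with $\d_P^{-1/2}W_{|M}$ on $M^-(P,\e)$ for some $\e>0$, and since $\d_P^{\pm1/2}$ is $\Zl^\times$-valued (as $\ell\neq p$) this gives $W_P(m)\in\Zl$ on that cone at once; for general $m$ one then applies your cone-to-everywhere extension to the exponential polynomial $a\mapsto W_P(ma)$ on $A_M$, whose exponents lie in the unit-valued set $\E(A_M,J_P(\pi))$, with no duality needed. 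If you do keep the duality route, note that Proposition \ref{proposition V} is stated over a coefficient field $E$ contained in a finite extension of $\Q_\ell^{\u}$, so passing to $\Zl$ requires the bookkeeping of Corollary \ref{corollary extension of lattices} and Theorem \ref{theorem integral structure of Whittaker}, and that the integrality of $\langle W',\rho^\vee(a)W\rangle_G$ is itself an output of Proposition \ref{proposition V} rather than a formal consequence of both functions being $\Zl$-valued (volumes on $U_0\backslash G$ need not be $\ell$-adic units).

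The one step that does not work as written is the extension argument. Separating the characters by Vandermonde-type difference operators only shows that certain $\Zl$-combinations of translates of $f(a)=\langle W'_M,\rho_M^\vee(a)W_P\rangle_M$ are integral beyond the cone; to recover $f(e)$ from the separated pieces you must invert a Vandermonde-type system whose determinant involves the differences $\chi(a_0)-\chi'(a_0)$, and these need not be units in $\Zl$ (two distinct unit characters can be congruent modulo the maximal ideal). The repair is standard and simpler: along a strictly contracting $a_0$, the sequence $n\mapsto f(a_0^n x)$ satisfies the linear recurrence with characteristic polynomial $\prod_\chi(X-\chi(a_0))^{d_\chi}$, which is monic with coefficients in $\Zl$ and constant term $\pm\prod_\chi\chi(a_0)^{d_\chi}\in\Zl^\times$; hence integrality propagates backwards from the deep cone to all of $\Z$, and one treats the finitely many coordinate directions of $A_M$ successively (the compact part of $A_M$ is handled by smoothness). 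With this replacement, and the $E$-versus-$\Zl$ bookkeeping above, your argument is correct and in fact more detailed than the paper's own proof; the remaining pieces (transitivity of Jacquet modules giving integrality of the Jacquet modules of $J_{P_J}(\pi)$, integrality of the cut-offs, and the treatment of the residual term $\Phi$ via Lemma \ref{lemma decomposition of lattices into characteristic subspaces}) match the paper.
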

\begin{proof}
First for each $P=MU \supseteq P_0$, the Jacquet module $J_P(\pi)$ has finite length (see \cite[VI.6.4]{Renard}) and it is integral thanks to 
\cite[Corollary 1.5]{DHKM22}, in particular $\E(A_M,J_P(\pi))$ is integral. Moreover 
the map $\Xi_P=\overline{D}_P=\overline{BH}_P$ identifies $J_P(\pi)$ with a submodule of 
$\Wh(M,\psi)$ (Theorem \ref{theorem Xi vs D} for example), in particular $\L_{J_P(\pi)}$ is a lattice in $J_P(\pi)$ thanks to Corollary \ref{corollary lattice in finite length Whittaker}. These observations, together with the fact that functions of the form $\1_{]0,\e[}(|\alpha(\ )|)$, $\1_{[\e,+\infty[}(|\alpha(\ )|)$ and $\1_{[\e,C]}(|\alpha(\ )|)$ have integral values are enough to apply the induction hypothesis going through the proof of Theorem \ref{theorem asymptotic expansion} and to deduce the expected expansion. 
\end{proof}

\bibliographystyle{plain}
\bibliography{Modlfactors}

\end{document}